\newtheorem{theorem}{Theorem}
\newtheorem{prop}[theorem]{Proposition}
\newtheorem{remark}{Remark}
\newtheorem{claim}{Claim}
\newenvironment{proof-sketch}{\noindent{\bf Sketch of Proof}\hspace*{1em}}{\qed\bigskip}
\newcommand{\RR}{\mathbb R}
\newcommand{\NN}{\mathbb N}
\newcommand{\ZZ}{\mathbb Z}
\renewcommand{\leq}{\leqslant}
\renewcommand{\geq}{\geqslant}
\begin{document}

\title[Multiple solutions for resonant problems]{Multiple solutions for resonant problems of the Robin $p$-Laplacian plus an indefinite potential}

\author[N.S. Papageorgiou]{Nikolaos S. Papageorgiou}
\address{National Technical University, Department of Mathematics,
				Zografou Campus, Athens 15780, Greece}
\email{\tt npapg@math.ntua.gr}

\author[V.D. R\u{a}dulescu]{Vicen\c{t}iu D. R\u{a}dulescu}
\address{Department of Mathematics, Faculty of Sciences, King Abdulaziz University, P.O. Box 80203, Jeddah 21589, Saudi Arabia \& Department of Mathematics, University of Craiova, 200585 Craiova, Romania}
\email{\tt vicentiu.radulescu@imar.ro}

\author[D.D. Repov\v{s}]{Du\v{s}an D. Repov\v{s}}
\address{Faculty of Education and Faculty of Mathematics and Physics, University of Ljubljana,
SI-1000 Ljubljana, Slovenia}
\email{\tt dusan.repovs@guest.arnes.si}

\keywords{$p$-Laplacian, Robin boundary condition, resonance, critical groups, multiple nontrivial smooth solutions, indefinite potential.\\
\phantom{aa} 2010 AMS Subject Classification: Primary 35J20, Secondary 35J60, 58E05}

\begin{abstract}
We study  a nonlinear boundary value problem driven by the $p$-Laplacian plus an indefinite potential with Robin boundary condition. The reaction term is a Carath\'eodory function which is asymptotically resonant at $\pm\infty$ with respect to a nonprincipal Ljusternik-Schnirelmann eigenvalue. Using variational methods, together with Morse theory and truncation-perturbation techniques, we show that the problem has at least three nontrivial smooth solutions, two of which have a fixed sign.
\end{abstract}

\maketitle


\section{Introduction}

Let $\Omega\subseteq\RR^N$ be a bounded domain with a $C^2$-boundary $\partial\Omega$. In this paper we study the following nonlinear Robin boundary value problem
\begin{equation}\label{eq1}
	\left\{\begin{array}{ll}
		-\Delta_pu(z)+\xi(z)|u(z)|^{p-2}u(z)=f(z,u(z))&\mbox{in}\ \Omega,\\
		\frac{\partial u}{\partial n_p}+\beta(z)|u|^{p-2}u=0&\mbox{on}\ \partial\Omega.
	\end{array}\right.
\end{equation}

Here $\Delta_p$ denotes the $p$-Laplacian differential operator defined by
$$\Delta_pu={\rm div}\,(|Du|^{p-2}Du)\ \mbox{for all}\ u\in W^{1,p}(\Omega),1<p<\infty.$$

Also, $\xi\in L^{\infty}(\Omega)$ is an indefinite (that is, sign changing) potential function. The reaction term $f(z,x)$ is a Carath\'eodory function (that is, for all $x\in\RR$ the mapping $z\mapsto f(z,x)$ is measurable and for almost all $z\in\Omega$ the function $x\mapsto f(z,x)$ is continuous) which is $p$-sublinear in $x\in\RR$ and asymptotically interacts as $x\rightarrow\pm\infty$ with the nonprincipal part of the spectrum of $W^{1,p}(\Omega)\ni u\mapsto -\Delta_pu+\xi(z)|u|^{p-2}u$ with Robin boundary condition. In the boundary condition, $\frac{\partial u}{\partial n_p}$ denotes the generalized normal derivative on $\partial\Omega$, defined by
$$\frac{\partial u}{\partial n_p}=|Du|^{p-2}(Du,n)_{\RR^N}=|Du|^{p-2}\frac{\partial u}{\partial n}\ \mbox{for all}\ u\in W^{1,p}(\Omega),$$
with $n(\cdot)$ being the outward unit normal on $\partial\Omega$. The boundary coefficient function $\beta(\cdot)$ satisfies $\beta\in C^{0,\alpha}(\partial\Omega)$ with $0<\alpha<1$ and $\beta(z)\geq 0$ for all $z\in \partial\Omega$. When $\beta\equiv 0$, we get the Neumann problem. In this resonant setting, we prove a multiplicity theorem, producing three nontrivial smooth solutions.

Previous three solutions theorems for equations driven by the $p$-Laplacian with zero potential function (that is, $\xi\equiv 0$), were proved by Gasinski \& Papageorgiou \cite{8}, Guo \& Liu \cite{10}, Liu \& Liu \cite{13}, Motreanu, Motreanu \& Papageorgiou \cite{15}, Papageorgiou \& Papageorgiou \cite{18}, Papageorgiou \& R\u{a}dulescu \cite{19,20} (Dirichlet problems) and Gasinski \& Papageorgiou \cite{9}, Motreanu, Motreanu \& Papageorgiou \cite{14} (Neumann problems). Recently, Mugnai \& Papageorgiou \cite{17} considered Neumann problems driven by the $p$-Laplacian plus an indefinite potential, while Papageorgiou \& R\u{a}dulescu \cite{21, 22} studied Robin problems driven by the $p$-Laplacian with no potential function. Of the aforementioned works, resonant problems are treated only in \cite{13}, \cite{15}, \cite{17}, and \cite{21}. In all these works resonance occurs with respect to the principal eigenvalue. The reason for this is that due to the nonlinearity of the
 differential operator, the eigenspaces are not linear spaces and consequently the underlying Sobolev space does not have a direct sum decomposition in terms of the eigenspaces. In addition, we have only partial knowledge of the spectrum of the operator. These drawbacks make the study of problems which are resonant with respect to the nonprincipal part of the spectrum, rather difficult and require different tools to prove existence and multiplicity theorems. Our approach uses variational methods based on the critical point theory for linking sets over cones (since the eigenspaces are cones), together with Morse theory (critical groups). With these tools we prove the existence of at least three nontrivial smooth solutions.

\section{Mathematical Background}

In this section, for the convenience of the reader, we briefly recall the main mathematical tools which will be used in the sequel.

So, let $X$ be a Banach space with $X^*$ its topological dual. By $\left\langle \cdot,\cdot\right\rangle$ we denote the duality brackets for the pair $(X^*,X)$. Given $\varphi\in C^1(X,\RR)$, we say that $\varphi$ satisfies the ``Cerami condition'' (the C-condition for short), if the following property holds:
\begin{center}
``Every sequence $\{u_n\}_{n\geq 1}\subseteq X$ such that $\{\varphi(u_n)\}_{n\geq 1}\subseteq\RR$ is bounded and
$$(1+||u_n||)\varphi'(u_n)\rightarrow 0\ \mbox{in}\ X^*,$$
admits a strongly convergent subsequence''.
\end{center}

Since the underlying Banach space is in general not locally compact (in most applications $X$ is infinite dimensional), the necessary compactness condition needed to have a coherent theory is passed to the functional $\varphi$ by introducing the aforementioned C-condition. This is analogous to what happens in infinite dimensional degree theory (the Leray-Schauder degree theory). The C-condition leads to a deformation theorem describing the changes in the topological structure of the sublevel sets $\varphi$. From this, one can derive the minimax theory of the critical values of $\varphi$. Prominent in that theory, is the so-called ``mountain pass theorem'' due to Ambrosetti and Rabinowitz \cite{2}.
\begin{theorem}\label{th1}
	Suppose that $\varphi\in C^1(X,\RR)$ satisfies the C-condition, $u_0,u_1\in X$, $||u_1-u_0||>\rho>0$
	$$\max\{\varphi(u_0),\varphi(u_1)\}<\inf[\varphi(u):||u-u_0||=\rho]=m_{\rho}$$
	and $c=\inf\limits_{\gamma\in\Gamma}\max\limits_{0\leq t\leq 1}\varphi(\gamma(t))$, where $\Gamma=\{\gamma\in C([0,1],X):\gamma(0)=u_0,\gamma(1)=u_1\}$. Then $c\geq m_{\rho}$ and $c$ is a critical value of $\varphi$.
\end{theorem}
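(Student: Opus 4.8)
This is the mountain pass theorem of Ambrosetti and Rabinowitz, here phrased under the weaker Cerami condition, so the plan is the classical deformation argument. First I would dispose of the inequality $c \geq m_\rho$. Fix $\gamma \in \Gamma$; then $\gamma(0) = u_0$ satisfies $||\gamma(0) - u_0|| = 0 < \rho$ while $\gamma(1) = u_1$ satisfies $||\gamma(1) - u_0|| = ||u_1 - u_0|| > \rho$, so by continuity of $t \mapsto ||\gamma(t) - u_0||$ and the intermediate value theorem there is $t_\gamma \in (0,1)$ with $||\gamma(t_\gamma) - u_0|| = \rho$. Hence $\max_{t\in[0,1]}\varphi(\gamma(t)) \geq \varphi(\gamma(t_\gamma)) \geq m_\rho$, and taking the infimum over $\gamma \in \Gamma$ yields $c \geq m_\rho$. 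In particular $c > \max\{\varphi(u_0),\varphi(u_1)\}$, so I may fix once and for all a number $\bar\varepsilon$ with $0 < \bar\varepsilon < c - \max\{\varphi(u_0),\varphi(u_1)\}$.

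Next I would argue by contradiction, assuming that $c$ is not a critical value of $\varphi$, i.e. $K_c := \{u \in X : \varphi'(u) = 0,\ \varphi(u) = c\} = \emptyset$. Here I invoke the deformation theorem that the C-condition supplies: there exist $\varepsilon \in (0,\bar\varepsilon)$ and a continuous map $\eta : [0,1]\times X \to X$ such that $\eta(0,\cdot) = \mathrm{id}_X$; $\eta(t,u) = u$ whenever $\varphi(u) \notin [c-\bar\varepsilon, c+\bar\varepsilon]$; the map $t \mapsto \varphi(\eta(t,u))$ is nonincreasing for every $u$; and $\eta(1, \varphi^{c+\varepsilon}) \subseteq \varphi^{c-\varepsilon}$, where $\varphi^a := \{u \in X : \varphi(u) \leq a\}$. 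Concretely, this $\eta$ is built by truncating a locally Lipschitz pseudo-gradient vector field for $\varphi$ so that it vanishes off the strip $\varphi^{-1}([c-\bar\varepsilon,c+\bar\varepsilon])$, normalizing it, multiplying it by the weight $1+||u||$, and flowing along the resulting field.

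With $\eta$ in hand I would use the definition $c = \inf_{\gamma\in\Gamma}\max_{t}\varphi(\gamma(t))$ to choose $\gamma \in \Gamma$ with $\max_{t\in[0,1]}\varphi(\gamma(t)) \leq c+\varepsilon$, so that $\gamma([0,1]) \subseteq \varphi^{c+\varepsilon}$, and set $\hat\gamma := \eta(1,\gamma(\cdot)) \in C([0,1],X)$. Since $\varphi(u_0),\varphi(u_1) < c-\bar\varepsilon$, the second property of $\eta$ gives $\hat\gamma(0) = \eta(1,u_0) = u_0$ and $\hat\gamma(1) = \eta(1,u_1) = u_1$, hence $\hat\gamma \in \Gamma$; but $\hat\gamma([0,1]) = \eta(1,\gamma([0,1])) \subseteq \eta(1,\varphi^{c+\varepsilon}) \subseteq \varphi^{c-\varepsilon}$, so $\max_{t\in[0,1]}\varphi(\hat\gamma(t)) \leq c-\varepsilon < c$, contradicting the definition of $c$. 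Therefore $K_c \neq \emptyset$, that is, $c$ is a critical value of $\varphi$; together with the first paragraph, $c \geq m_\rho$.

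The only genuinely substantial point is the deformation theorem under the Cerami condition in place of the more familiar Palais--Smale condition. The obstacle is that the pseudo-gradient flow is not a priori complete and need not produce a uniform energy drop; the device that resolves both issues is the rescaling by $1+||u||$, which converts the C-condition into a uniform positive lower bound for $||\varphi'(u)||_*$ along the flow inside the critical strip, and this bound simultaneously prevents finite-time blow-up of the flow and forces the quantitative descent $\eta(1,\varphi^{c+\varepsilon}) \subseteq \varphi^{c-\varepsilon}$. Everything after that is soft point-set topology, as carried out above.
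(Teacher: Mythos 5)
The paper offers no proof to compare against: Theorem \ref{th1} is recalled in Section 2 purely as background, with a citation to Ambrosetti and Rabinowitz, and its Cerami-condition version is a standard result in the critical point theory literature (see, e.g., Motreanu, Motreanu and Papageorgiou \cite{16}). Your argument is the classical one and is correct: the intermediate value theorem applied to $t\mapsto ||\gamma(t)-u_0||$ gives $c\geq m_{\rho}$, and the contradiction via the quantitative deformation lemma is set up properly — in particular you correctly fix $\bar\varepsilon<c-\max\{\varphi(u_0),\varphi(u_1)\}$ so that the deformation fixes the endpoints $u_0,u_1$ and the deformed path remains in $\Gamma$. The one substantial ingredient, the deformation theorem under the C-condition rather than the Palais--Smale condition, is invoked rather than proved, but you identify accurately where the weight $1+||u||$ enters (converting the C-condition into a uniform positive lower bound for $(1+||u||)\,||\varphi'(u)||_*$ on the critical strip, which both keeps the rescaled pseudo-gradient flow globally defined and forces the quantitative descent), and since that lemma is standard the appeal is legitimate at the level of detail appropriate for a background theorem.
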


For our problem, the underlying space is the Sobolev space $W^{1,p}(\Omega)$ with the norm
$$||u||=\left[||u||^p_p+||Du||^p_p\right]^{1/p}\ \mbox{for all}\ u\in W^{1,p}(\Omega).$$

In addition, we will also use the ordered Banach space $C^1(\overline{\Omega})$, with positive (order) cone given by
$$C_+=\{u\in C^1(\overline{\Omega}):u(z)\geq 0\ \mbox{for all}\ z\in\overline\Omega\}.$$

In $C_+$ we consider the nonempty open set $D_+$ given by
$$D_+=\{u\in C_+:u(z)>0\ \mbox{for all}\ z\in\overline{\Omega}\}.$$

To deal with the Robin boundary condition, we will need the ``boundary'' Lebesgue spaces $L^q(\partial\Omega)\ (1\leq q\leq\infty)$. So, on $\partial\Omega$ we consider the $(N-1)$-dimensional Hausdorff (surface) measure denoted by $\sigma(\cdot)$. Using this measure, we can define the Lebesgue spaces $L^q(\partial\Omega)$. From the theory of Sobolev spaces, we know there exists a unique linear continuous map $\gamma_0:W^{1,p}(\Omega)\rightarrow L^q(\partial\Omega)$, known as the trace map, such that $\gamma_0(u)=u|_{\partial\Omega}$ for all $u\in W^{1,p}(\Omega)\cap C(\overline{\Omega})$. So, we can understand the trace map as representing the boundary values of $u\in W^{1,p}(\Omega)$. We know that $\gamma_0$ is a compact mapping into $L^q(\partial\Omega)$ for all $q\in\left[1,\frac{p(N-1)}{N-p}\right)$ and
$${\rm im}\,\gamma_0=W^{\frac{1}{p'},p}(\partial\Omega)\left(\frac{1}{p'}+\frac{1}{p}=1\right)\ \mbox{and}\ {\rm ker}\,\gamma_0=W^{1,p}_0(\Omega).$$

In the sequel, for the sake of notational simplicity, we shall drop the use of the map $\gamma_0$. It is understood that all restrictions of the Sobolev functions $u\in W^{1,p}(\Omega)$ on $\partial\Omega$, are defined in the sense of traces.

Next, we recall some basic facts about the spectrum of the differential operator $u\mapsto -\Delta_pu+\xi(z)|u|^{p-2}u$ with Robin boundary condition. For details we refer to Mugnai \& Papageorgiou \cite{17} and Papageorgiou \& R\u{a}dulescu \cite{21}.

We consider the following nonlinear eigenvalue problem:
\begin{equation}\label{eq2}
	\left\{\begin{array}{ll}
		-\Delta_pu(z)+\xi(z)|u(z)|^{p-2}u(z)=\hat{\lambda}|u(z)|^{p-2}u(z)&\mbox{in}\ \Omega,\\
		\frac{\partial u}{\partial n_p}+\beta(z)|u|^{p-2}u=0&\mbox{on}\ \partial\Omega.
	\end{array}\right.
\end{equation}

We say that $\hat{\lambda}\in\RR$ is an eigenvalue of the differential operator, if problem (\ref{eq2}) admits a nontrivial solution $\hat{u}\in W^{1,p}(\Omega)$, known as an eigenfunction corresponding to the eigenvalue $\hat{\lambda}$. We know that there exists a smallest eigenvalue $\hat{\lambda}_1\in\RR$ which has the following properties:
\begin{itemize}
	\item $\hat{\lambda}_1$ is isolated in the spectrum $\hat{\sigma}(p)$ of the differential operator;
	\item $\hat{\lambda}_1$ is simple (that is, if $\hat{u}_1,\hat{u}_2$ are eigenfunctions corresponding to $\hat{\lambda}_1$, then $\hat{u}_1=\xi\hat{u}_2$ for some $\xi\neq 0$); and
	\item $\hat{\lambda}_1=\inf\left[\frac{\gamma(u)}{||u||^p_p}:u\in W^{1,p}(\Omega),u\neq 0\right]$, where $\gamma:W^{1,p}(\Omega)\rightarrow\RR$ is the $C^1$-functional defined by
	\begin{equation}\label{eq3}
		\gamma(u)=||Du||^p_p+\int_{\Omega}\xi(z)|u|^pdz+\int_{\partial\Omega}\beta(z)|u|^pd\sigma\ \mbox{for all}\ u\in W^{1,p}(\Omega).
	\end{equation}
\end{itemize}

The infimum in (\ref{eq3}) is realized on the corresponding one dimensional eigenspace. Evidently, the elements of this eigenspace do not change sign. In what follows, by $\hat{u}_1$ we denote the positive $L^p$-normalized (that is, $||\hat{u}_1||_p=1$) eigenfunction corresponding to the eigenvalue $\hat{\lambda}_1$. The nonlinear regularity theory (see Lieberman \cite[Theorem 2]{12}), implies that $\hat{u}_1\in C_+\backslash\{0\}$. Moreover, using the nonlinear strong maximum principle (see, for example, Gasinski \& Papageorgiou \cite[p. 738]{7}), we show that $\hat{u}_1\in D_+$.

It is easy to show that the spectrum $\hat{\sigma}(p)$ is closed and so given the isolation of $\hat{\lambda}_1$, the second eigenvalue of (\ref{eq2}) is well-defined by
$$\hat{\lambda}_2=\inf[\hat{\lambda}\in\hat{\sigma}(p):\hat{\lambda}>\hat{\lambda}_1].$$

Using the Ljusternik-Schnirelmann minimax scheme with the Fadell-Rabinowitz  cohomological index {\rm ind}$\,(\cdot)$ (see \cite{6}), we produce a whole sequence $\{\hat{\lambda}_k\}_{k\geq 1}$ of distinct eigenvalues such that $\hat{\lambda}_k\rightarrow+\infty$ (see Cingolani \& Degiovanni \cite{4}). However, we do not know if this sequence exhausts $\hat{\sigma}(p)$. This is the case if $p=2$ (linear eigenvalue problem) or $N=1$ (ordinary differential equation). All eigenvalues $\hat{\lambda}\neq\hat{\lambda}_1$ have nodal (that is, sign changing) eigenfunctions.

As we already mentioned in the introduction, our approach involves also the use of critical groups (Morse theory). So, let us briefly recall some basic definitions and facts from that theory.

So, let $X$ be a Banach space, $\varphi\in C^1(X,\RR)$ and $c\in\RR$. We introduce the following sets:
\begin{eqnarray*}
	&&K_{\varphi}=\{u\in X:\varphi'(u)=0\}\ (\mbox{the critical set of}\ \varphi),\\
	&&K_{\varphi}^c=\{u\in K_{\varphi}:\varphi(u)=c\}\ (\mbox{the critical set of}\ \varphi\ \mbox{at the level}\ c),\\
	&&\varphi^c=\{u\in X:\varphi(u)\leq c\}\ (\mbox{the sublevel set of}\ \varphi\ \mbox{at}\ c).
\end{eqnarray*}

Let $(Y_1,Y_2)$ be a topological pair such that $Y_2\subseteq Y_1\subseteq X$. For every $k\in\NN_0$, by $H_k(Y_1,Y_2)$ we denote the $k$th-relative singular homology group with integer coefficients for the pair $(Y_1,Y_2)$. Recall that $H_k(Y_1,Y_2)=0$ for all $k\in-\NN$. If $u\in K^c_{\varphi}$ is isolated, then the critical groups of $\varphi$ at $u$ are defined by
$$C_k(\varphi,u)=H_k(\varphi^c\cap U,\varphi^c\cap U\backslash\{u\})\ \mbox{for all}\ k\in\NN_0.$$

Here, $U$ is a neighborhood of $u$ such that $K_{\varphi}\cap \varphi^c\cap U=\{u\}$. The excision property of the singular homology implies that the above definition of critical groups is independent of the particular choice of the neighborhood $U$.

If $\varphi\in C^1(X,\RR)$ satisfies the C-condition and $\inf\varphi(K_{\varphi})>-\infty$, then the critical groups of $\varphi$ at infinity are defined by
$$C_k(\varphi,\infty)=H_k(X,\varphi^c)\ \mbox{for all}\ k\in\NN_0,$$
with $c<\inf\varphi(K_{\varphi})$. The second deformation theorem (see Gasinski \& Papageorgiou \cite[p. 628]{7}), implies that this definition is independent of the choice of the level $c<\inf\varphi(K_{\varphi})$.

Assuming that $K_{\varphi}$ is infinite, we define
\begin{eqnarray*}
	&&M(t,u)=\sum_{k\geq 0}{\rm rank}\, C_k(\varphi,u)\ \mbox{for all}\ t\in\RR,\ \mbox{all}\ u\in K_{\varphi},\\
	&&P(t,\infty)=\sum_{k\geq 0}{\rm rank}\, C_k(\varphi,\infty)\ \mbox{for all}\ t\in\RR.
\end{eqnarray*}

The Morse relation says that
\begin{equation}\label{eq4}
 {\underset{\mathrm{u\in K_{\varphi}}}\sum}M(t,u)=P(t,\infty)+(1+t)Q(t)\ \mbox{for all}\ t\in\RR,
\end{equation}
where $Q(t)={\underset{\mathrm{k\leq 0}}\sum}\beta_kt^k$ is a formal series in $t\in\RR$ with nonnegative integer coefficients $\beta_k$.

From (\ref{eq4}) it follows easily that, if for some $m\in\NN$ we have $C_m(\varphi,\infty)\neq 0$, then there exists $u\in K_{\varphi}$ such that $C_m(\varphi,u)\neq 0$. Moreover, if $u\in X$ is a local minimizer of $\varphi$, then
$$C_k(\varphi,u)=\delta_{k,0}\ZZ\ \mbox{for all}\ k\in\NN,$$
with $\delta_{k,0}$ being the Kronecker symbol, that is,
$$\delta_{k,\vartheta}=\left\{\begin{array}{ll}
	1&\mbox{if}\ k=\vartheta\\
	0&\mbox{if}\ k\neq \vartheta
\end{array}\right.\ \mbox{for all}\ k,\vartheta\in\NN_0.$$

Let $A:W^{1,p}(\Omega)\rightarrow W^{1,p}(\Omega)^*$ be the nonlinear map defined by
$$\left\langle A(u),h\right\rangle=\int_{\Omega}|Du|^{p-2}(Du,Dh)_{\RR^N}dz\ \mbox{for all}\ u,h\in W^{1,p}(\Omega).$$

From Motreanu, Motreanu \& Papageorgiou \cite[p. 40]{16}, we have the following property.
\begin{prop}\label{prop2}
	The map $A:W^{1,p}(\Omega)\rightarrow W^{1,p}(\Omega)^*$ is bounded (maps bounded sets into bounded sets), demicontinuous, maximal monotone and of type $(S)_+$, that is,
	$$``u_n\stackrel{w}{\rightarrow}u\ \mbox{in}\ W^{1,p}(\Omega),\limsup\limits_{n\rightarrow\infty}\left\langle A(u_n),u_n-u\right\rangle\leq 0\Rightarrow u_n\rightarrow u\ \mbox{in}\ W^{1,p}(\Omega)".$$
\end{prop}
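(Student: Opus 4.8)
The plan is to verify each of the four claimed properties of the operator $A$ directly from its integral definition, using elementary inequalities for the map $\xi\mapsto|\xi|^{p-2}\xi$ on $\RR^N$ together with monotonicity/convexity arguments and the reflexivity of $W^{1,p}(\Omega)$.

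First I would establish \emph{boundedness}: for $u,h\in W^{1,p}(\Omega)$, H\"older's inequality gives $|\langle A(u),h\rangle|\le\|Du\|_p^{p-1}\|Dh\|_p\le\|Du\|_p^{p-1}\|h\|$, so $\|A(u)\|_*\le\|Du\|_p^{p-1}$, and hence $A$ carries bounded sets to bounded sets. For \emph{demicontinuity}, I would take $u_n\to u$ in $W^{1,p}(\Omega)$; then along a subsequence $Du_n\to Du$ a.e.\ and $|Du_n|^p$ is uniformly integrable, so $|Du_n|^{p-2}Du_n\to|Du|^{p-2}Du$ in $L^{p'}(\Omega,\RR^N)$ (using the continuity of $\xi\mapsto|\xi|^{p-2}\xi$ and a Vitali-type argument), whence $\langle A(u_n),h\rangle\to\langle A(u),h\rangle$ for every $h$; since the limit is independent of the subsequence, the whole sequence converges weakly in $W^{1,p}(\Omega)^*$, i.e.\ $A(u_n)\stackrel{w}{\to}A(u)$. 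For \emph{monotonicity}, I would invoke the standard pointwise inequality $\big(|\xi|^{p-2}\xi-|\eta|^{p-2}\eta,\xi-\eta\big)_{\RR^N}\ge 0$ for all $\xi,\eta\in\RR^N$ (with equality iff $\xi=\eta$), integrate over $\Omega$ to get $\langle A(u)-A(v),u-v\rangle\ge 0$, and then note that a monotone, demicontinuous (hence hemicontinuous) operator on a Banach space is automatically maximal monotone.

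The main point, and the one requiring the most care, is the \emph{$(S)_+$ property}. I would assume $u_n\stackrel{w}{\to}u$ in $W^{1,p}(\Omega)$ and $\limsup_{n\to\infty}\langle A(u_n),u_n-u\rangle\le 0$. By the compact embedding $W^{1,p}(\Omega)\hookrightarrow L^p(\Omega)$ we have $u_n\to u$ in $L^p(\Omega)$; combined with weak convergence, $u_n\to u$ in $W^{1,p}(\Omega)$ is equivalent to $\|Du_n\|_p\to\|Du\|_p$. Writing
$$0\le\langle A(u_n)-A(u),u_n-u\rangle=\langle A(u_n),u_n-u\rangle-\langle A(u),u_n-u\rangle,$$
the second term tends to $0$ since $u_n-u\stackrel{w}{\to}0$, so $\limsup_n\langle A(u_n)-A(u),u_n-u\rangle\le 0$, and monotonicity forces $\langle A(u_n)-A(u),u_n-u\rangle\to 0$. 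Expanding this as $\|Du_n\|_p^p+\|Du\|_p^p-\int_\Omega|Du_n|^{p-2}(Du_n,Du)\,dz-\int_\Omega|Du|^{p-2}(Du,Du_n)\,dz\to 0$ and using H\"older on the cross terms, one deduces $\|Du_n\|_p\to\|Du\|_p$; then the uniform convexity of $L^p(\Omega,\RR^N)$ (for $1<p<\infty$) upgrades the weak convergence $Du_n\stackrel{w}{\to}Du$ to strong convergence $Du_n\to Du$ in $L^p(\Omega,\RR^N)$, and hence $u_n\to u$ in $W^{1,p}(\Omega)$.

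I would expect the $(S)_+$ verification to be the only genuinely delicate step; boundedness and monotonicity are immediate, demicontinuity is a routine dominated-convergence argument, and maximal monotonicity is a formal consequence of monotonicity plus hemicontinuity. Since all of this is standard and recorded in Motreanu--Motreanu--Papageorgiou \cite{16}, in the write-up I would simply cite that reference rather than reproduce the calculations, as the paper's excerpt already does.
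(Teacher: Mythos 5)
Your proposal is correct: the paper itself offers no proof of this proposition, merely citing Motreanu--Motreanu--Papageorgiou \cite{16}, and the argument you sketch (H\"older for boundedness, the Nemytskii continuity for demicontinuity, the pointwise monotonicity inequality plus the Minty--Browder criterion for maximality, and the $(S)_+$ property via $\|Du_n\|_p\rightarrow\|Du\|_p$ combined with the uniform convexity of $L^p(\Omega,\RR^N)$) is exactly the standard proof recorded there. Since you conclude by saying you would simply cite \cite{16}, your treatment coincides with the paper's.
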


Also, as a consequence of the properties of $\hat{\lambda}_1$ and $\hat{u}_1$, we have (see Papageorgiou \& R\u{a}dulescu \cite{21}).
\begin{prop}\label{prop3}
	If $\vartheta\in L^{\infty}(\Omega),\ \vartheta(z)\leq\hat{\lambda}_1$ for almost everywhere $z$ in $\Omega$, $\vartheta\not\equiv\hat{\lambda}_1$, then there exists $c_0>0$ such that
	$$\gamma(u)-\int_{\Omega}\vartheta(z)|u|^pdz\geq c_0||u||^p\ \mbox{for all}\ u\in W^{1,p}(\Omega).$$
\end{prop}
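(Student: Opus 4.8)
The plan is to argue by contradiction, exploiting the fact that the claimed inequality is positively homogeneous of degree $p$, together with the variational characterisation of $\hat{\lambda}_1$ in (\ref{eq3}) and the compactness of the embeddings $W^{1,p}(\Omega)\hookrightarrow L^p(\Omega)$ and $W^{1,p}(\Omega)\hookrightarrow L^p(\partial\Omega)$ recalled in Section 2. So, suppose no such $c_0>0$ exists. Then, normalising, we obtain a sequence $\{u_n\}_{n\geq 1}\subseteq W^{1,p}(\Omega)$ with $||u_n||=1$ for all $n\geq 1$ and
$$\gamma(u_n)-\int_{\Omega}\vartheta(z)|u_n|^p\,dz<\frac{1}{n}\quad\mbox{for all}\ n\geq 1.$$
Since $\vartheta(z)\leq\hat{\lambda}_1$ a.e. in $\Omega$, the variational characterisation of $\hat{\lambda}_1$ gives $\gamma(u_n)-\int_{\Omega}\vartheta|u_n|^p\,dz\geq\gamma(u_n)-\hat{\lambda}_1||u_n||^p_p\geq 0$, so this quantity converges to $0$.

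Passing to a subsequence if necessary, I would assume $u_n\stackrel{w}{\rightarrow}u$ in $W^{1,p}(\Omega)$; by the two compact embeddings, $u_n\rightarrow u$ in $L^p(\Omega)$ and in $L^p(\partial\Omega)$, so all the lower-order terms pass to the limit, namely $\int_{\Omega}\xi|u_n|^p\,dz\rightarrow\int_{\Omega}\xi|u|^p\,dz$, $\int_{\partial\Omega}\beta|u_n|^p\,d\sigma\rightarrow\int_{\partial\Omega}\beta|u|^p\,d\sigma$ and $\int_{\Omega}\vartheta|u_n|^p\,dz\rightarrow\int_{\Omega}\vartheta|u|^p\,dz$, while $||Du||^p_p\leq\liminf_{n\rightarrow\infty}||Du_n||^p_p$ by weak lower semicontinuity. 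Combining these facts we get
$$0\leq\gamma(u)-\int_{\Omega}\vartheta(z)|u|^p\,dz\leq\liminf_{n\rightarrow\infty}\left[\gamma(u_n)-\int_{\Omega}\vartheta(z)|u_n|^p\,dz\right]=0,$$
hence $\gamma(u)=\int_{\Omega}\vartheta(z)|u|^p\,dz$. Now I would distinguish two cases. If $u=0$, then the three lower-order integrals all tend to $0$, so $||Du_n||^p_p=\gamma(u_n)-\int_{\Omega}\xi|u_n|^p\,dz-\int_{\partial\Omega}\beta|u_n|^p\,d\sigma\rightarrow 0$; together with $||u_n||_p\rightarrow 0$ this forces $||u_n||\rightarrow 0$, contradicting $||u_n||=1$. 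If $u\neq 0$, then $0=\gamma(u)-\int_{\Omega}\vartheta|u|^p\,dz\geq\gamma(u)-\hat{\lambda}_1||u||^p_p\geq 0$, so $\gamma(u)=\hat{\lambda}_1||u||^p_p$ and $u$ realises the infimum defining $\hat{\lambda}_1$; by the simplicity of $\hat{\lambda}_1$, $u=t\hat{u}_1$ with $t\neq 0$, and since $\hat{u}_1\in D_+$ we have $|u(z)|>0$ for a.e. $z\in\Omega$. But then $\int_{\Omega}(\hat{\lambda}_1-\vartheta(z))|u|^p\,dz=\hat{\lambda}_1||u||^p_p-\int_{\Omega}\vartheta|u|^p\,dz=0$ with the integrand nonnegative a.e., which forces $\vartheta\equiv\hat{\lambda}_1$, contradicting the hypothesis. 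In both cases we reach a contradiction, which proves the proposition.

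The step I expect to be the only real obstacle is the limit passage above: the point is that every term appearing in $\gamma(u_n)-\int_{\Omega}\vartheta|u_n|^p\,dz$ other than the gradient term $||Du_n||^p_p$ must be shown to converge, and this is exactly where the compactness of the Rellich--Kondrachov embedding and of the trace embedding $W^{1,p}(\Omega)\hookrightarrow L^p(\partial\Omega)$ enters; everything else is routine, with the non-vanishing of the first eigenfunction ($\hat{u}_1\in D_+$) being the ingredient that rules out the case $u\neq 0$.
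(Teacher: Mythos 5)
Your proof is correct, and the paper itself offers no proof of this proposition (it simply cites Papageorgiou--R\u{a}dulescu \cite{21}); your normalization-plus-compactness contradiction argument, splitting into the cases $u=0$ (where the gradient term is forced to vanish) and $u\neq 0$ (where simplicity of $\hat{\lambda}_1$ and $\hat{u}_1\in D_+$ force $\vartheta\equiv\hat{\lambda}_1$), is exactly the standard argument used in that reference. The only point worth making explicit is that a minimizer of the Rayleigh quotient in (\ref{eq3}) is in fact a $\hat{\lambda}_1$-eigenfunction (Lagrange multiplier rule on the $L^p$-sphere), which is what licenses the conclusion $u=t\hat{u}_1$.
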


We now  fix our notation. Given $x\in\RR$, we set $x^{\pm}=\max\{\pm x,0\}$. Then for $u\in W^{1,p}(\Omega)$ we define
$$u^{\pm}(\cdot)=u(\cdot)^{\pm}.$$

We have $u^{\pm}\in W^{1,p}(\Omega),\ u=u^+-u^-,\ |u|=u^++u^-$. Given a measurable function $g:\Omega\times\RR\rightarrow\RR$ (for example, a Carath\'eodory function), we define
$$N_g(u)(\cdot)=g(\cdot,u(\cdot))\ \mbox{for all}\ u\in W^{1,p}(\Omega),$$
which is the Nemytskii (superposition) operator corresponding to $g$.

Finally, we introduce the hypotheses on the data of problem (\ref{eq1}):

\smallskip
$H(\xi):\ \xi\in L^{\infty}(\Omega)$.

\smallskip
$H(\beta):\ \beta\in C^{0,\alpha}(\partial\Omega)$ with $\alpha\in(0,1)$ and $\beta(z)\geq 0$ for all $z\in\partial\Omega$.
\begin{remark}
	If $\beta\equiv 0$, then we obtain the Neumann problem. So, our framework here includes the Neumann problem as a special case.
\end{remark}

$H(f):\ f:\Omega\times\RR\rightarrow\RR$ is a Carath\'eodory function such that $f(z,0)=0$ for almost all $z\in\Omega$ and
\begin{itemize}
	\item[(i)] for every $\rho>0$, there exists $a_{\rho}\in L^{\infty}(\Omega)$ such that
	$$|f(z,x)|\leq a_{\rho}(z)\ \mbox{for almost all}\ z\in\Omega,\ \mbox{all}\ |x|\leq\rho;$$
	\item[(ii)] there exists an integer $m\geq 2$ such that
	$$\lim\limits_{x\rightarrow\pm\infty}\frac{f(z,x)}{|x|^{p-2}x}=\hat{\lambda}_m\ \mbox{uniformly for almost all}\ z\in\Omega;$$
	\item[(iii)] if $F(z,x)=\int^x_0f(z,s)ds$, then
	$$\lim\limits_{x\rightarrow\pm\infty}[f(z,x)x-pF(z,x)]=+\infty\ \mbox{uniformly for almost all}\ z\in\Omega; \ \mbox{and}$$
	\item[(iv)] there exist functions $\hat{\vartheta},\vartheta\in L^{\infty}(\Omega)$ such that
	$$\vartheta(z)\leq\hat{\lambda}_1\ \mbox{for almost all}\ z\in\Omega,\ \vartheta\not\equiv\hat{\lambda}_1,$$
	$\hat{\vartheta}(z)\leq\liminf\limits_{x\rightarrow 0}\frac{f(z,x)}{|x|^{p-2}x}\leq\limsup\limits_{x\rightarrow 0}\frac{f(z,x)}{|x|^{p-2}x}\leq\vartheta(z)$ uniformly for almost all $z\in\Omega$.
\end{itemize}
\begin{remark}
	Hypothesis $H(ii)$ implies that asymptotically at $\pm\infty$, we have resonance with respect to any nonprincipal Ljusternik-Schnirelmann eigenvalue of the differential operator. Hypotheses $H(f)(i),(ii)$ imply that
	\begin{equation}\label{eq5}
		|f(z,x)|\leq c_1(1+|x|^{p-1})\ \mbox{for almost all}\ z\in\Omega,\ \mbox{all}\ x\in\RR,\ \mbox{some}\ c_1>0.
	\end{equation}
\end{remark}

Hypothesis $H(f)(iv)$ says that  we have nonuniform nonresonance at zero with respect to the principal eigenvalue $\hat{\lambda}_1$ from the left.

\section{Constant Sign Solutions}

In this section, using minimax methods, we establish the existence of at least two nontrivial smooth solutions of constant sign (one positive and one negative).

Let $\varphi: W^{1,p}(\Omega)\rightarrow\RR$ be the energy (Euler) functional for problem (\ref{eq1}) defined by
$$\varphi(u)=\frac{1}{p}\gamma(u)-\int_{\Omega}F(z,u(z))dz\ \mbox{for all}\ u\in W^{1,p}(\Omega).$$

Evidently, $\varphi\in C^1(W^{1,p}(\Omega))$. Let $\mu>||\xi||_{\infty}$ (see hypothesis $H(\xi)$) and consider the following truncations-perturbations of the reaction term $f(z,\cdot)$:
\begin{eqnarray}
	&&g_+(z,x)=\left\{\begin{array}{ll}
		0&\mbox{if}\ x\leq 0\\
		f(z,x)+\mu x^{p-1}&\mbox{if}\ 0<x
	\end{array}\right.\label{eq6}\\
	&&g_-(z,x)=\left\{\begin{array}{ll}
		f(z,x)+\mu|x|^{p-2}x&\mbox{if}\ x<0\\
		0&\mbox{if}\ 0\leq x.
	\end{array}\right.\label{eq7}
\end{eqnarray}

Both are Carath\'eodory functions. We define
$$G_{\pm}(z,x)=\int^x_0g_{\pm}(z,s)ds$$
and consider the $C^1$-functionals $\hat{\varphi}_{\pm}:W^{1,p}(\Omega)\rightarrow\RR$ defined by
$$\hat{\varphi}_{\pm}(u)=\frac{1}{p}\gamma(u)+\frac{\mu}{p}||u||^p_p-\int_{\Omega}G_{\pm}(z,u(z))dz\ \mbox{for all}\ u\in W^{1,p}(\Omega).$$

\begin{prop}\label{prop4}
	If hypotheses $H(\xi),H(\beta),H(f)(i),(ii),(iii)$ hold, then the functionals $\hat{\varphi}_{\pm}$ satisfy the C-condition.
\end{prop}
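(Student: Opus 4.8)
The plan is to prove that $\hat\varphi_+$ satisfies the C-condition; the argument for $\hat\varphi_-$ is symmetric. So let $\{u_n\}_{n\geq1}\subseteq W^{1,p}(\Omega)$ be a sequence with $|\hat\varphi_+(u_n)|\leq M_1$ for some $M_1>0$ and all $n$, and with $(1+\|u_n\|)\hat\varphi_+'(u_n)\to 0$ in $W^{1,p}(\Omega)^*$. The key step is to show that $\{u_n\}$ is bounded in $W^{1,p}(\Omega)$; once this is done, the $(S)_+$-property of $A$ (Proposition \ref{prop2}) together with the compactness of the trace map and of the embedding $W^{1,p}(\Omega)\hookrightarrow L^p(\Omega)$ finishes the proof in the usual way: passing to a subsequence $u_n\stackrel{w}{\rightarrow}u$, testing $\hat\varphi_+'(u_n)$ with $u_n-u$, and noting that all terms except $\langle A(u_n),u_n-u\rangle$ go to zero (using \eqref{eq5} for the reaction term), so $\limsup_n\langle A(u_n),u_n-u\rangle\leq 0$ and hence $u_n\to u$ in $W^{1,p}(\Omega)$.

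To prove boundedness I would first test the approximate-criticality condition with $h=-u_n^-\in W^{1,p}(\Omega)$: from the definition \eqref{eq6} we have $g_+(z,x)=0$ for $x\leq0$, so $G_+(z,\cdot)$ contributes nothing on $\{u_n<0\}$, and the term $\frac{\mu}{p}\|u_n\|_p^p$ combines with $\frac1p\gamma(u_n^-)$ to give, modulo terms controlled by $\|\xi\|_\infty<\mu$ and $\beta\geq0$, an estimate of the form $c\|u_n^-\|^p\leq \varepsilon_n\to 0$; hence $u_n^-\to 0$ in $W^{1,p}(\Omega)$. It remains to bound $\{u_n^+\}$. For this I would argue by contradiction: suppose $\|u_n^+\|\to\infty$ and set $y_n=u_n^+/\|u_n^+\|$, so $\|y_n\|=1$ and, along a subsequence, $y_n\stackrel{w}{\rightarrow}y\geq0$ in $W^{1,p}(\Omega)$ and $y_n\to y$ in $L^p(\Omega)$ and in $L^p(\partial\Omega)$. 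Dividing the criticality condition $\langle A(u_n^+),h\rangle+\int_\Omega(\xi(z)+\mu)(u_n^+)^{p-1}h\,dz+\int_{\partial\Omega}\beta(z)(u_n^+)^{p-1}h\,d\sigma=\int_\Omega g_+(z,u_n^+)h\,dz+\varepsilon_n'$ by $\|u_n^+\|^{p-1}$ and using \eqref{eq5} to get weak convergence of the normalized reaction term to some $\hat\lambda_m|y|^{p-2}y$-type limit (via hypothesis $H(f)(ii)$ and a variant of the Aubin--Clarke / Lieberman argument), I would deduce from the $(S)_+$-property that $y_n\to y$ in $W^{1,p}(\Omega)$, so $\|y\|=1$, $y\geq0$, $y\neq0$, and $y$ solves $-\Delta_p y+\xi(z)y^{p-1}=\hat\lambda_m y^{p-1}$ in $\Omega$ with the Robin condition. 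Since $\hat\lambda_m$ is a nonprincipal eigenvalue and any eigenfunction for $\hat\lambda_m$ must be nodal (as recalled in Section 2), while $y\geq0$, $y\neq0$, this is a contradiction.

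The contradiction above rules out the case $y\neq0$ but I still need to exclude the possibility that the rescaling limit is $y=0$; this is exactly where hypothesis $H(f)(iii)$ enters, and I expect it to be the main technical obstacle. In that case I would use the energy bound: from $\hat\varphi_+(u_n)\leq M_1$ and $\langle\hat\varphi_+'(u_n),u_n\rangle\geq -\varepsilon_n$ one forms $p\hat\varphi_+(u_n)-\langle\hat\varphi_+'(u_n),u_n\rangle$, in which the $\gamma$-terms and the $\mu$-terms cancel, leaving $\int_\Omega[f(z,u_n^+)u_n^+-pF(z,u_n^+)]\,dz\leq M_2$ for some $M_2>0$ (after absorbing the $u_n^-\to0$ contribution). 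Writing $e(z,x)=f(z,x)x-pF(z,x)$, hypothesis $H(f)(iii)$ says $e(z,x)\to+\infty$ as $x\to+\infty$ uniformly in $z$; together with $H(f)(i)$ this gives a lower bound $e(z,x)\geq -M_3$ for all $x\geq0$ and, for any $K>0$, $e(z,x)\geq K$ once $x\geq x_K$. Since $y=0$ forces (after passing to a subsequence) $u_n^+(z)\to+\infty$ a.e. on a set of positive measure — one argues this by showing $|\{u_n^+\geq x_K\}|\not\to 0$, else the normalized sequence would have $\|y_n\|_p\to0$ contradicting $\|y_n\|=1$ via Proposition \ref{prop3} applied with a suitable $\vartheta$ — Fatou's lemma yields $\int_\Omega e(z,u_n^+)\,dz\to+\infty$, contradicting the bound $M_2$. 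Hence $\{u_n^+\}$ is bounded, and with $u_n^-\to0$ we conclude $\{u_n\}$ is bounded in $W^{1,p}(\Omega)$, completing the verification of the C-condition.
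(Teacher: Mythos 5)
Your proposal is correct and follows essentially the same route as the paper: test with $-u_n^-$ to kill the negative part, rescale $u_n^+$, use the $(S)_+$-property of $A$ to get strong convergence of $y_n$ to a nonnegative limit, and derive a contradiction from the fact that eigenfunctions corresponding to $\hat{\lambda}_m$ with $m\geq 2$ must be nodal. One remark: your third paragraph is superfluous and internally inconsistent — you have already established $y_n\rightarrow y$ strongly in $W^{1,p}(\Omega)$ via the $(S)_+$-property, so $\|y\|=\lim_n\|y_n\|=1$ and the case $y=0$ simply cannot occur; accordingly, the paper's proof of this proposition never invokes $H(f)(iii)$ (that hypothesis is what handles resonance for the untruncated functional $\varphi$ in Proposition \ref{prop5}, where no sign information is available and the contradiction must instead come from $\int_{\Omega}[f(z,u_n)u_n-pF(z,u_n)]\,dz\rightarrow+\infty$). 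Had the $y=0$ case actually required treatment, the sketch you give would not suffice as written: $\|y_n\|=1$ in $W^{1,p}(\Omega)$ does not by itself rule out $\|y_n\|_p\rightarrow 0$, and Proposition \ref{prop3} applies only to weights $\vartheta\leq\hat{\lambda}_1$, $\vartheta\not\equiv\hat{\lambda}_1$, which is not the situation here.
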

\begin{proof}
	We shall present the proof for the functional $\hat{\varphi}_{+}$, the proof for $\hat{\varphi}_-$ is similar.
	
	So, let $\{u_n\}_{n\geq 1}\subseteq W^{1,p}(\Omega)$ be a sequence such that
	\begin{eqnarray}
		&&|\hat{\varphi}_+(u_n)|\leq M_1\ \mbox{for some}\ M_1>0,\ \mbox{all}\ n\in\NN,\label{eq8}\\
		&&(1+||u_n||)\hat{\varphi}'_+(u_n)\rightarrow 0\ \mbox{in}\ W^{1,p}(\Omega).\label{eq9}
	\end{eqnarray}
	
	From (\ref{eq9}) we have
	\begin{eqnarray}\label{eq10}
		&&\left|\left\langle A(u_n),h\right\rangle+\int_{\Omega}(\xi(z)+\mu)|u_n|^{p-2}u_nhdz+\int_{\partial\Omega}\beta(z)|u_n|^{p-2}u_nhd\sigma-\int_{\Omega}g_+(z,u_n)hdz\right|\nonumber\\
		&&\hspace{1cm}\leq\frac{\epsilon_n||h||}{1+||u_n||}\ \mbox{for all}\ h\in W^{1,p}(\Omega),\ \mbox{with}\ \epsilon_n\rightarrow 0^+.
	\end{eqnarray}
	
	In (\ref{eq10}) we choose $h=-u_n^-\in W^{1,p}(\Omega)$. Then
	\begin{eqnarray}\label{eq11}
		&&\gamma(u^-_n)+\mu||u^-_n||^p_p\leq\epsilon_n\ \mbox{for all}\ n\in\NN\ (\mbox{see (\ref{eq6})}),\nonumber\\
		&\Rightarrow&c_2||u^-_n||^p\leq\epsilon_n\ \mbox{for some}\ c_2>0,\ \mbox{all}\ n\in\NN\ (\mbox{recall}\ \mu>||\xi||_{\infty},\ \mbox{see}\ H(\beta)),\nonumber\\
		&\Rightarrow&u^-_n\rightarrow 0\ \mbox{in}\ W^{1,p}(\Omega).
	\end{eqnarray}
	
	From (\ref{eq10}), (\ref{eq11}) and (\ref{eq6}), we have
	\begin{eqnarray}\label{eq12}
		&&\left|\left\langle A(u^+_n),h\right\rangle+\int_{\Omega}\xi(z)(u^+_n)^{p-1}hdz+\int_{\partial\Omega}\beta(z)(u^+_n)^{p-1}hd\sigma-\int_{\Omega}f(z,u^+_n)hdz\right|\leq \epsilon'_n||h||\\
		&&\hspace{8cm}\mbox{for all}\ h\in W^{1,p}(\Omega),\ \mbox{with}\ \epsilon'_n\rightarrow 0.\nonumber
	\end{eqnarray}
	\begin{claim}
		$\{u^+_n\}_{n\geq 1}\subseteq W^{1,p}(\Omega)$ is bounded.
	\end{claim}
	
	We argue indirectly. So, suppose that the claim is not true. By passing to a subsequence if necessary, we may assume that $||u^+_n||\rightarrow\infty$. We set $y_n=\frac{u^+_n}{||u^+_n||}$, $n\in\NN$. Then
	$$||y_n||=1\ \mbox{and}\ y_n\geq 0\ \mbox{for all}\ n\in\NN.$$
	
	So, we may assume that
	\begin{equation}\label{eq13}
		y_n\stackrel{w}{\rightarrow} y\ \mbox{in}\ W^{1,p}(\Omega)\ \mbox{and}\ y_n\rightarrow y\ \mbox{in}\ L^p(\Omega)\ \mbox{and in}\ L^p(\partial\Omega),y\geq 0.
	\end{equation}
	
	From (\ref{eq12}) we have
	\begin{eqnarray}\label{eq14}
		&&\left|\left\langle A(y_n),h\right\rangle+\int_{\Omega}\xi(z)y^{p-1}_nhdz+\int_{\partial\Omega}\beta(z)y^{p-1}_nhd\sigma-\int_{\Omega}\frac{N_f(u^+_n)}{||u^+_n||^{p-1}}hdz\right|\leq\epsilon'_n\frac{||h||}{||u^+_n||^{p-1}}\\
		&&\hspace{7cm}\mbox{for all}\ h\in W^{1,p}(\Omega),\ \mbox{all}\ n\in\NN\nonumber.
	\end{eqnarray}
	
	From (\ref{eq5}) it is clear that
	\begin{equation}\label{eq15}
		\left\{\frac{N_f(u^+_n)}{||u^+_n||^{p-1}}\right\}_{n\geq 1}\subseteq L^{p'}(\Omega)\ \mbox{is bounded}.
	\end{equation}
	
	In (\ref{eq14}) we choose $h=y_n-y\in W^{1,p}(\Omega)$, pass to the limit as $n\rightarrow\infty$ and use (\ref{eq13}) and (\ref{eq15}). Then
	\begin{eqnarray}\label{eq16}
		&&\lim\limits_{n\rightarrow\infty}\left\langle A(y_n),y_n-y\right\rangle=0,\nonumber\\
		&\Rightarrow&y_n\rightarrow y\ \mbox{in}\ W^{1,p}(\Omega)\ \mbox{(see Proposition \ref{prop2}), hence}\ y\geq 0,\ ||y||=1.
	\end{eqnarray}
	
	Because (\ref{eq15}) and hypothesis $H(f)(ii)$, at least for a subsequence we have
	\begin{equation}\label{eq17}
		\frac{N_f(u^+_n)}{||u^+_n||^{p-1}}\stackrel{w}{\rightarrow}\hat{\lambda}_my^{p-1}\ \mbox{in}\ L^{p'}(\Omega)\ \mbox{as}\ n\rightarrow\infty
	\end{equation}
	(see Aizicovici, Papageorgiou \& Staicu \cite{1}, proof of Proposition 16). Therefore, if in (\ref{eq14}) we pass to the limit as $n\rightarrow\infty$ and use (\ref{eq16}), (\ref{eq17}), then
	\begin{eqnarray}\label{eq18}
		&&\left\langle A(y),h\right\rangle+\int_{\Omega}\xi(z)y^{p-1}hdz+\int_{\partial\Omega}\beta(z)y^{p-1}hd\sigma=\hat{\lambda}_m\int_{\Omega}y^{p-1}hdz\nonumber\\
		&&\hspace{6cm}\mbox{for all}\ h\in W^{1,p}(\Omega),\nonumber\\
		&\Rightarrow&-\Delta_py(z)+\xi(z)y(z)^{p-1}=\hat{\lambda}_my(z)^{p-1}\ \mbox{for almost all}\ z\in\Omega,\ \frac{\partial y}{\partial n_p}+\beta(z)y^{p-1}=0\\
		&&\hspace{6cm}\mbox{on}\ \partial\Omega\ (\mbox{see Papageorgiou \& R\u{a}dulescu \cite{21}}).\nonumber
	\end{eqnarray}
	
	From (\ref{eq18}) and since $y\neq 0$ (see (\ref{eq16})) and $m\geq 2$ (see hypothesis $H(f)(ii)$), it follows that $y$ must be nodal (that is, sign changing), which contradicts (\ref{eq16}). This proves the claim.
	
	From (\ref{eq11}) and the claim it follows that $\{u_n\}_{n\geq 1}\subseteq W^{1,p}(\Omega)$ is bounded and so we may assume that
	\begin{eqnarray}\label{eq19}
		u_n\stackrel{w}{\rightarrow}u\ \mbox{in}\ W^{1,p}(\Omega)\ \mbox{and}\ u_n\rightarrow u\ \mbox{in}\ L^p(\Omega)\ \mbox{and in}\ L^p(\partial\Omega).
	\end{eqnarray}
	
	Evidently,
	\begin{eqnarray}\label{eq20}
		\left\{N_{g_+}(u_n)\right\}_{n\geq 1}\subseteq L^{p'}(\Omega)\ \mbox{is bounded}
	\end{eqnarray}
	(see (\ref{eq5}), (\ref{eq6}), (\ref{eq19})). So, if in (\ref{eq10}) we choose $h=u_n-u\in W^{1,p}(\Omega)$, pass to the limit as $n\rightarrow\infty$ and use (\ref{eq19}), (\ref{eq20}), then
	\begin{eqnarray*}
		&&\lim\limits_{n\rightarrow\infty}\left\langle A(u_n),u_n-u\right\rangle=0,\\
		&\Rightarrow&u_n\rightarrow u\ \mbox{in}\ W^{1,p}(\Omega)\ (\mbox{see Proposition \ref{prop2}}),\\
		&\Rightarrow&\hat{\varphi}_+\ \mbox{satisfies the C-condition}.
	\end{eqnarray*}
	
	Similarly for the functional $\hat{\varphi}_-$, using this time (\ref{eq7}).
\end{proof}
\begin{prop}\label{prop5}
	If hypotheses $H(\xi),H(\beta),H(f)(i),(ii),(iii)$ hold, then the functional $\varphi$ satisfies the C-condition.
\end{prop}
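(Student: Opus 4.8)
The plan is to follow the scheme of the proof of Proposition~\ref{prop4}, the essential new point being that, since $\varphi$ contains no truncation, the rescaled limit extracted from an unbounded Cerami sequence will in general be a \emph{nodal} eigenfunction of $\hat\lambda_m$; hence---unlike in Proposition~\ref{prop4}---no contradiction on the sign of this limit is available, and the contradiction must instead be produced from hypothesis $H(f)(iii)$. So let $\{u_n\}_{n\geq1}\subseteq W^{1,p}(\Omega)$ be a sequence with $|\varphi(u_n)|\leq M_1$ for all $n\in\NN$ and $(1+\|u_n\|)\varphi'(u_n)\to0$ in $W^{1,p}(\Omega)^*$. The second condition yields, for all $h\in W^{1,p}(\Omega)$,
$$\left|\langle A(u_n),h\rangle+\int_\Omega\xi(z)|u_n|^{p-2}u_nh\,dz+\int_{\partial\Omega}\beta(z)|u_n|^{p-2}u_nh\,d\sigma-\int_\Omega f(z,u_n)h\,dz\right|\leq\frac{\epsilon_n\|h\|}{1+\|u_n\|},$$
with $\epsilon_n\to0^+$. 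Testing with $h=u_n$ gives $\gamma(u_n)-\int_\Omega f(z,u_n)u_n\,dz\to0$, and the bound $|\varphi(u_n)|\leq M_1$ gives that $\gamma(u_n)-p\int_\Omega F(z,u_n)\,dz$ is bounded; subtracting, the sequence $\int_\Omega[f(z,u_n)u_n-pF(z,u_n)]\,dz$ is bounded, a fact I keep in reserve.

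The core of the argument is the claim that $\{u_n\}_{n\geq1}$ is bounded in $W^{1,p}(\Omega)$. Arguing by contradiction, assume along a subsequence that $\|u_n\|\to\infty$, and set $y_n=u_n/\|u_n\|$, so that $\|y_n\|=1$ and, passing to a further subsequence, $y_n\stackrel{w}{\rightarrow}y$ in $W^{1,p}(\Omega)$, $y_n\to y$ in $L^p(\Omega)$ and in $L^p(\partial\Omega)$. Dividing the displayed inequality by $\|u_n\|^{p-1}$ and observing---via (\ref{eq5})---that $\{N_f(u_n)/\|u_n\|^{p-1}\}_{n\geq1}$ is bounded in $L^{p'}(\Omega)$, I test with $h=y_n-y$, pass to the limit, and apply Proposition~\ref{prop2} to conclude $y_n\to y$ in $W^{1,p}(\Omega)$; hence $\|y\|=1$, so $y\neq0$. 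On the set $\{y\neq0\}$ one has $|u_n(z)|\to\infty$, so $H(f)(ii)$ gives $N_f(u_n)/\|u_n\|^{p-1}\stackrel{w}{\rightarrow}\hat\lambda_m|y|^{p-2}y$ in $L^{p'}(\Omega)$ (as in the proof of Proposition~16 in \cite{1}); passing to the limit in the rescaled inequality shows that $y$ solves the eigenvalue problem (\ref{eq2}) with $\hat\lambda=\hat\lambda_m$ (cf.\ \cite{21}). Since $m\geq2$, $y$ must be nodal, and therefore both $\Omega_+=\{y>0\}$ and $\Omega_-=\{y<0\}$ have positive measure.

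To close, I invoke $H(f)(iii)$. Together with $H(f)(i)$ it produces a constant $c_*>0$ such that $f(z,x)x-pF(z,x)\geq-c_*$ for a.a.\ $z\in\Omega$ and all $x\in\RR$, so Fatou's lemma applies to $\{f(z,u_n)u_n-pF(z,u_n)+c_*\}_{n\geq1}$; moreover $u_n(z)\to+\infty$ for a.a.\ $z\in\Omega_+$ and $u_n(z)\to-\infty$ for a.a.\ $z\in\Omega_-$, whence $f(z,u_n(z))u_n(z)-pF(z,u_n(z))\to+\infty$ for a.a.\ $z\in\Omega_+\cup\Omega_-$, a set of positive measure. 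It follows that $\int_\Omega[f(z,u_n)u_n-pF(z,u_n)]\,dz\to+\infty$, contradicting the boundedness noted above; this proves the claim. With $\{u_n\}_{n\geq1}$ now bounded, I may assume $u_n\stackrel{w}{\rightarrow}u$ in $W^{1,p}(\Omega)$, $u_n\to u$ in $L^p(\Omega)$ and in $L^p(\partial\Omega)$; choosing $h=u_n-u$ in the displayed inequality and using (\ref{eq5}) and the strong $L^p$-convergence, I obtain $\lim_n\langle A(u_n),u_n-u\rangle=0$, hence $u_n\to u$ in $W^{1,p}(\Omega)$ by Proposition~\ref{prop2}. Thus $\varphi$ satisfies the C-condition.

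I expect the boundedness claim to be the main obstacle: because the rescaled limit $y$ is a nodal eigenfunction of $\hat\lambda_m$, the sign argument used for $\hat\varphi_\pm$ in Proposition~\ref{prop4} is no longer available, and the whole proof rests on converting the resonance condition $H(f)(ii)$ and the ``energy-gap'' condition $H(f)(iii)$ into the divergence of $\int_\Omega[f(z,u_n)u_n-pF(z,u_n)]\,dz$. Within that step the two delicate points are the uniform-in-$z$ lower bound that legitimizes Fatou's lemma and the correct identification of the weak $L^{p'}$-limit of $N_f(u_n)/\|u_n\|^{p-1}$ as $\hat\lambda_m|y|^{p-2}y$ (in particular, that this limit vanishes a.e.\ on $\{y=0\}$).
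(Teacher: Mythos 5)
Your proposal is correct and follows essentially the same route as the paper: rescale an unbounded Cerami sequence, obtain $y_n\to y$ strongly with $\|y\|=1$ via the $(S)_+$-property, and derive the contradiction by combining $H(f)(iii)$, Fatou's lemma on $\{y\neq 0\}$, and the uniform lower bound on $f(z,x)x-pF(z,x)$ against the boundedness of $\int_\Omega[f(z,u_n)u_n-pF(z,u_n)]\,dz$. The only difference is that you additionally identify $y$ as a nodal eigenfunction for $\hat\lambda_m$; the paper omits this step in Proposition~\ref{prop5} since the Fatou argument needs only that $\{y\neq 0\}$ has positive measure, but the extra observation is harmless.
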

\begin{proof}
	Let $\{u_n\}_{n\geq 1}\subseteq W^{1,p}(\Omega)$ be a sequence such that
	\begin{eqnarray}
		&&|\varphi(u_n)|\leq M_2\ \mbox{for some}\ M_2>0,\ \mbox{all}\ n\in\NN,\label{eq21}\\
		&&(1+||u_n||)\varphi'(u_n)\rightarrow 0\ \mbox{in}\ W^{1,p}(\Omega)^*\ \mbox{as}\ n\rightarrow\infty.\label{eq22}
	\end{eqnarray}
	
	From (\ref{eq22}) we have
	\begin{eqnarray}\label{eq23}
		&&\left|\left\langle A(u_n),h\right\rangle+\int_{\Omega}\xi(z)|u_n|^{p-2}u_nhdz+\int_{\partial\Omega}\beta(z)|u_n|^{p-2}u_nhd\sigma-\int_{\Omega}f(z,u_n)hdz\right|\nonumber\\
		&&\hspace{1cm}\leq\frac{\epsilon_n||h||}{1+||u_n||}\ \mbox{for all}\ h\in W^{1,p}(\Omega),\ \mbox{with}\ \epsilon_n\rightarrow 0^+
	\end{eqnarray}
	
	In (\ref{eq23}) we choose $h=u_n\in W^{1,p}(\Omega)$ and obtain
	\begin{equation}\label{eq24}
		-\gamma(u_n)+\int_{\Omega}f(z,u_n)u_ndz\leq\epsilon_n\ \mbox{for all}\ n\in\NN.
	\end{equation}
	
	On the other hand, from (\ref{eq21}) we have
	\begin{equation}\label{eq25}
		\gamma(u_n)-\int_{\Omega}pF(z,u_n)dz\leq pM_2\ \mbox{for all}\ n\in\NN.
	\end{equation}
	
	We add (\ref{eq24}) and (\ref{eq25}). Then
	\begin{equation}\label{eq26}
		\int_{\Omega}[f(z,u_n)u_n-pF(z,u_n)]dz\leq M_3\ \mbox{for some}\ M_3>0,\ \mbox{all}\ n\in\NN
	\end{equation}
	\begin{claim}
		$\{u_n\}_{n\geq 1}\subseteq W^{1,p}(\Omega)$ is bounded.
	\end{claim}
	
	We argue again by contradiction. So, suppose that $||u_n||\rightarrow\infty$. We set $y_n=\frac{u_n}{||u_n||}$ for all $n\in\NN$. Then $||y_n||=1$  and so we may assume that
	\begin{equation}\label{eq27}
		y_n\stackrel{w}{\rightarrow}y\ \mbox{in}\ W^{1,p}(\Omega)\ \mbox{and}\ y_n\rightarrow y\ \mbox{in}\ L^p(\Omega)\ \mbox{and in}\ L^p(\partial\Omega).
	\end{equation}
	
	From (\ref{eq23}) we have
	\begin{eqnarray}\label{eq28}
		&&\left|\left\langle A(y_n),h\right\rangle+\int_{\Omega}\xi(z)|y_n|^{p-2}y_nhdz+\int_{\partial\Omega}\beta(z)|y_n|^{p-2}y_nhdz-\int_{\Omega}\frac{N_f(u_n)}{||u_n||^{p-1}}hdz\right|\nonumber\\
		&&\hspace{7cm}\leq\frac{\epsilon_n}{(1+||u_n||)||u_n||^{p-1}}\ \mbox{for all}\ n\in\NN.
	\end{eqnarray}
	
	From (\ref{eq5}) it is clear that
	\begin{equation}\label{eq29}
		\left\{\frac{N_f(u_n)}{||u_n||^{p-1}}\right\}_{n\geq 1}\subseteq L^{p'}(\Omega)\ \mbox{is bounded}.
	\end{equation}
	
	So, if in (\ref{eq28}) we choose $h=y_n-y\in W^{1,p}(\Omega)$, pass to the limit as $n\rightarrow\infty$ and use (\ref{eq27}), (\ref{eq29}), then
	\begin{eqnarray}\label{eq30}
		&&\lim\limits_{n\rightarrow\infty}\left\langle A(y_n),y_n-y\right\rangle=0,\nonumber\\
		&\Rightarrow&y_n\rightarrow y\ \mbox{in}\ W^{1,p}(\Omega)\ \mbox{(see Proposition \ref{prop2}), hence}\ ||y||=1.
	\end{eqnarray}
	
	Since $y\neq 0$ (see (\ref{eq30})), for $D=\{z\in\Omega:y(z)\neq 0\}$ we have $|D|_N>0$, with $|\cdot|_N$ being the Lebesgue measure on $\RR^N$ and
	\begin{equation}\label{eq31}
		|u_n(z)|\rightarrow+\infty\ \mbox{for all}\ z\in D.
	\end{equation}
	
	Then hypothesis $H(f)(iii)$, Fatou's lemma and (\ref{eq31}) imply that
	\begin{equation}\label{eq32}
		\int_D[f(z,u_n)u_n-pF(z,u_n)]dz\rightarrow+\infty\ \mbox{as}\ n\rightarrow\infty.
	\end{equation}
	
	Also, hypotheses $H(f)(i),(iii)$ imply that we can find $c_3>0$ such that
	\begin{equation}\label{eq33}
		-c_3\leq f(z,x)x-pF(z,x)\ \mbox{for almost all}\ z\in\Omega,\ \mbox{all}\ x\in\RR.
	\end{equation}
	
	Then
	\begin{eqnarray}\label{eq34}
		&&\int_{\Omega}[f(z,u_n)u_n-pF(z,u_n)]dz\nonumber\\
		&=&\int_D[f(z,u_n)u_n-pF(z,u_n)]dz+\int_{\Omega\backslash D}[f(z,u_n)u_n-pF(z,u_n)]dz\nonumber\\
		&\geq&\int_D[f(z,u_n)u_n-pF(z,u_n)]dz-c_3|\Omega\backslash D|_N\nonumber\\
		&&\hspace{1cm}(\mbox{see (\ref{eq33}) and recall that}\ |\cdot|_N\ \mbox{is the Lebesgue measure on}\ \RR^N),\nonumber\\
		\Rightarrow&&\int_{\Omega}[f(z,u_n)u_n-pF(z,u_n)]dz\rightarrow+\infty\ \mbox{as}\ n\rightarrow\infty\ (\mbox{see (\ref{eq32})}).
	\end{eqnarray}
	
	Comparing (\ref{eq26}) and (\ref{eq34}), we reach a contradiction. This proves the claim.
	
	Because of the claim, we may assume that
	\begin{equation}\label{eq35}
		u_n\stackrel{w}{\rightarrow}u\ \mbox{in}\ W^{1,p}(\Omega)\ \mbox{and}\ u_n\rightarrow u\ \mbox{in}\ L^p(\Omega)\ \mbox{and in}\ L^p(\partial\Omega).
	\end{equation}
	
	From (\ref{eq5}) and (\ref{eq35}) we see that
	\begin{equation}\label{eq36}
		\{N_f(u_n)\}_{n\geq 1}\subseteq L^{p'}(\Omega)\ \mbox{is bounded}.
	\end{equation}
	
	So, if in (\ref{eq23}) we choose $h=u_n-u\in W^{1,p}(\Omega)$, pass to the limit as $n\rightarrow\infty$ and use (\ref{eq35}), (\ref{eq36}), then
	\begin{eqnarray*}
		&&\lim\limits_{n\rightarrow\infty}\left\langle A(u_n),u_n-u\right\rangle=0,\\
		&\Rightarrow&u_n\rightarrow u\ \mbox{in}\ W^{1,p}(\Omega),\\
		&\Rightarrow&\varphi\ \mbox{satisfies the C-condition}.
	\end{eqnarray*}
\end{proof}
\begin{prop}\label{prop6}
	If hypotheses $H(\xi), H(\beta),H(f)(iv)$ and (\ref{eq5}) hold, then $u=0$ is a local minimizer of the functionals $\hat{\varphi}_{\pm}$ and $\varphi$.
\end{prop}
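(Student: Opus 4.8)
The plan is to first prove that $u=0$ is a local minimizer of each of $\varphi$ and $\hat{\varphi}_\pm$ in the strong topology of $C^1(\overline{\Omega})$, and then to upgrade this to the $W^{1,p}(\Omega)$-topology by the standard ``$C^1$ versus $W^{1,p}$ minimizers'' principle. The whole argument hinges on a one-sided control of $F$ near the origin coming from $H(f)(iv)$: given $\epsilon>0$ there is $\delta=\delta(\epsilon)>0$ with $f(z,x)\le(\vartheta(z)+\epsilon)|x|^{p-2}x$ for a.a.\ $z\in\Omega$ and $0<x\le\delta$, and the reversed inequality for $-\delta\le x<0$; integrating in $x$ from $0$ gives $F(z,x)\le\frac1p(\vartheta(z)+\epsilon)|x|^p$ for a.a.\ $z\in\Omega$ and all $|x|\le\delta$.

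Next I would take $u\in C^1(\overline{\Omega})$ with $||u||_{C^1(\overline{\Omega})}\le\delta$, so that $|u(z)|\le\delta$ for every $z\in\overline{\Omega}$, and insert this bound into $\varphi$. Using $||u||_p^p\le||u||^p$ and Proposition~\ref{prop3} applied to the weight $\vartheta$, one gets $\varphi(u)\ge\frac1p\big[\gamma(u)-\int_\Omega\vartheta(z)|u|^p\,dz\big]-\frac\epsilon p||u||^p\ge\frac{c_0-\epsilon}p||u||^p$, hence $\varphi(u)\ge0=\varphi(0)$ once $\epsilon<c_0$ is fixed; this says $0$ is a local $C^1(\overline{\Omega})$-minimizer of $\varphi$. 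For $\hat{\varphi}_+$ I would use that the gradients of $u^+$ and $u^-$ have disjoint supports, so $\gamma(u)=\gamma(u^+)+\gamma(u^-)$, and that (by (\ref{eq6})) $\hat{\varphi}_+(u)=\frac1p\gamma(u^+)-\int_\Omega F(z,u^+)\,dz+\frac1p\big[\gamma(u^-)+\mu||u^-||_p^p\big]$. The first two terms are bounded below by $\frac{c_0-\epsilon}p||u^+||^p$ exactly as for $\varphi$, and the bracket is $\ge c_2||u^-||^p$ since $\mu>||\xi||_\infty$ (the coercivity already observed in (\ref{eq11})); thus $\hat{\varphi}_+(u)\ge0=\hat{\varphi}_+(0)$ on a $C^1(\overline{\Omega})$-ball around $0$, and the computation for $\hat{\varphi}_-$ is symmetric via (\ref{eq7}).

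Finally I would invoke the equivalence between local $C^1(\overline{\Omega})$-minimizers and local $W^{1,p}(\Omega)$-minimizers, which applies because $f$ and the truncated perturbations $g_\pm$ all obey the subcritical growth (\ref{eq5}); this is the nonlinear counterpart of the Brezis--Nirenberg theorem and is available in the Robin setting with an indefinite potential (see \cite{16}, \cite{21}). I expect this last step to be the only genuinely non-elementary point --- note that a direct argument in $W^{1,p}(\Omega)$ is hopeless here, since the $p$-sublinear growth of $f$ provides no room to absorb a higher-order term and smallness of the $W^{1,p}$-norm does not force smallness of $|u(z)|$. Steps 1 and 2, by contrast, are routine once Proposition~\ref{prop3} and the coercivity gained from the $\mu$-perturbation are in hand.
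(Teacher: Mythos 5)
Your proof is correct, but it takes a genuinely different route from the paper, and one of your side remarks is mistaken. The paper argues directly in $W^{1,p}(\Omega)$: besides the near-zero estimate $F(z,x)\le\frac1p(\vartheta(z)+\epsilon)|x|^p$ for $|x|\le\delta$, it observes that for any fixed $r\in(p,p^*)$ the growth bound (\ref{eq5}) yields $F(z,x)\le c_4|x|^r$ on $\{|x|\ge\delta\}$ (on that set both $1$ and $|x|^p$ are dominated by $c_\delta|x|^r$ precisely because $r>p$ and $|x|$ is bounded away from $0$), hence the global estimate $F(z,x)\le\frac1p(\vartheta(z)+\epsilon)|x|^p+c_5|x|^r$ for all $x\in\RR$. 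Combined with Proposition~\ref{prop3} and the Sobolev embedding $W^{1,p}(\Omega)\hookrightarrow L^r(\Omega)$, this gives $\hat\varphi_+(u)\ge c_8\|u\|^p-c_6\|u\|^r$, and since $r>p$ the origin is a strict local minimizer for small $\|u\|$. So your assertion that ``a direct argument in $W^{1,p}(\Omega)$ is hopeless'' because the $p$-sublinear growth provides no higher-order term is not right: the higher-order term is manufactured exactly from the region $|x|\ge\delta$, where smallness of $|u(z)|$ is irrelevant. Your alternative --- establish a local $C^1(\overline\Omega)$-minimum using only the near-zero bound, then invoke the $C^1$-versus-$W^{1,p}$ local minimizers theorem in the Robin/indefinite-potential setting --- is valid (your decomposition $\hat\varphi_+(u)=\frac1p\gamma(u^+)-\int_\Omega F(z,u^+)dz+\frac1p[\gamma(u^-)+\mu\|u^-\|_p^p]$ and the coercivity of the bracket are correct), but it outsources the real work to a substantial regularity-based theorem, whereas the paper's argument is self-contained modulo Proposition~\ref{prop3}.
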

\begin{proof}
	We give the proof for the functional $\hat{\varphi}_+$, the proofs for $\hat{\varphi}_-$ and $\varphi$ are similar.
	
	Hypothesis $H(f)(iv)$ implies that given $\epsilon>0$, we can find $\delta=\delta(\epsilon)>0$ such that
	\begin{equation}\label{eq37}
		F(z,x)\leq\frac{1}{p}(\vartheta(z)+\epsilon)|x|^p\ \mbox{for almost all}\ z\in\Omega,\ \mbox{all}\ |x|\leq\delta.
	\end{equation}
	
	If $r>p$, then because of (\ref{eq5}) we can find $c_4=c_4(\epsilon,r)>0$ such that
	\begin{equation}\label{eq38}
		F(z,x)\leq c_4|x|^r\ \mbox{for almost all}\ z\in\Omega,\ \mbox{all}\ |x|\geq \delta
	\end{equation}
	
	From (\ref{eq37}) and (\ref{eq38}) it follows that we can find $c_5>0$ such that
	\begin{equation}\label{eq39}
		F(z,x)\leq\frac{1}{p}(\vartheta(z)+\epsilon)|x|^p+c_5|x|^r\ \mbox{for almost all}\ z\in\Omega,\ \mbox{all}\ x\in\RR
	\end{equation}
	(recall that $\vartheta\in L^{\infty}(\Omega)$, see hypothesis $H(f)(iv)$).
	
	So, for all $u\in W^{1,p}(\Omega)$, we have
	\begin{eqnarray}\label{eq40}
		 \hat{\varphi}_+(u)&\geq&\frac{1}{p}\gamma(u)+\frac{\mu}{p}||u^-||^p_p-\frac{1}{p}\int_{\Omega}\vartheta(z)(u^+)^pdz-\frac{\epsilon}{p}||u^+||^p-c_6||u||^r\nonumber\\
		&&\hspace{3cm}\mbox{for some}\ c_6>0\ \mbox{(see (\ref{eq39}) and (\ref{eq6}))}\nonumber\\
		 &\geq&c_7||u^-||^p+\frac{1}{p}\left[\gamma(u^+)-\int_{\Omega}\vartheta(z)(u^+)^pdz-\epsilon||u^+||^p\right]-c_6||u||^r\nonumber\\
		&&\hspace{3cm}\mbox{for some}\ c_7>0\ (\mbox{recall}\ \mu>||\xi||_{\infty})\nonumber\\
		&\geq&c_7||u^-||^p+\frac{1}{p}(c_0-\epsilon)||u^+||^p-c_6||u||^r\ (\mbox{see Proposition \ref{prop3}}).
	\end{eqnarray}
	
	Choosing $\epsilon\in(0,c_0)$, from (\ref{eq40}) we have
	\begin{equation}\label{eq41}
		\hat{\varphi}_+(u)\geq c_8||u||^p-c_6||u||^r\ \mbox{for some}\ c_8>0,\ \mbox{all}\ u\in W^{1,p}(\Omega).
	\end{equation}
	
	Since $r>p$, from (\ref{eq41}) we see that we can find $\rho\in(0,1)$ small such that
	\begin{eqnarray*}
		&&\hat{\varphi}_+(u)>0=\hat{\varphi}_+(0)\ \mbox{for all}\ u\in W^{1,p}(\Omega),\ 0<||u||\leq\rho,\\
		&\Rightarrow&u=0\ \mbox{is a (strict) local minimizer of}\ \hat{\varphi}_+.
	\end{eqnarray*}

	Similarly for the functionals $\hat{\varphi}_-$ and $\varphi$.
\end{proof}

We are now ready to produce two nontrivial smooth solutions of constant sign.
\begin{prop}\label{prop7}
	If hypotheses $H(\xi),H(\beta),H(f)$ hold, then problem (\ref{eq1}) has at least two nontrivial constant sign smooth solutions
	$$u_0\in D_+\ \mbox{and}\ v_0\in-D_+.$$
\end{prop}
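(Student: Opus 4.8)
The plan is to produce the two constant sign solutions by applying the mountain pass theorem to the truncated functionals $\hat\varphi_+$ and $\hat\varphi_-$, and then using the nonlinear regularity theory and the strong maximum principle to upgrade the solutions to elements of $D_+$ and $-D_+$ respectively. I will give the argument for $\hat\varphi_+$; the case of $\hat\varphi_-$ is symmetric.

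First I would record the two geometric ingredients. By Proposition~\ref{prop4}, $\hat\varphi_+$ satisfies the C-condition, and by Proposition~\ref{prop6}, $u=0$ is a (strict) local minimizer of $\hat\varphi_+$; in fact from (\ref{eq41}) there is $\rho\in(0,1)$ with $\inf[\hat\varphi_+(u):\|u\|=\rho]=m_\rho>0=\hat\varphi_+(0)$. For the second mountain pass endpoint I would test $\hat\varphi_+$ along the ray $t\hat u_1$ with $t>0$ large. Using hypothesis $H(f)(ii)$ (asymptotic resonance at $\hat\lambda_m$ with $m\ge2$) together with the variational characterisation $\hat\lambda_1=\inf[\gamma(u)/\|u\|_p^p]$, one gets, after integrating the asymptotic estimate $F(z,x)\ge\frac1p(\hat\lambda_m-\epsilon)x^p-c$ for $x\ge0$, that
\[
\hat\varphi_+(t\hat u_1)=\frac{t^p}{p}\gamma(\hat u_1)+\frac{\mu t^p}{p}\|\hat u_1\|_p^p-\int_\Omega F(z,t\hat u_1)\,dz-\text{(lower order)}\le\frac{t^p}{p}\big(\hat\lambda_1+\mu-\hat\lambda_m+\epsilon\big)+c|\Omega|,
\]
and since $\hat\lambda_m>\hat\lambda_1$ we can choose $\epsilon$ small so the bracket is strictly negative; hence $\hat\varphi_+(t\hat u_1)\to-\infty$ as $t\to+\infty$, so for $t$ large enough $\|t\hat u_1\|>\rho$ and $\hat\varphi_+(t\hat u_1)<0=\hat\varphi_+(0)=m_\rho$ ... wait, $m_\rho>0$, so $\hat\varphi_+(t\hat u_1)<m_\rho$ is what we need, and this holds. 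Thus the mountain pass theorem (Theorem~\ref{th1}, with $u_0=0$, $u_1=t\hat u_1$) gives $u_0\in K_{\hat\varphi_+}$ with $\hat\varphi_+(u_0)\ge m_\rho>0$, so $u_0\ne0$.

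Next I would show $u_0$ is a nonnegative solution of (\ref{eq1}). Writing out $\hat\varphi_+'(u_0)=0$ gives
\[
\langle A(u_0),h\rangle+\int_\Omega(\xi(z)+\mu)|u_0|^{p-2}u_0h\,dz+\int_{\partial\Omega}\beta(z)|u_0|^{p-2}u_0h\,d\sigma=\int_\Omega g_+(z,u_0)h\,dz
\]
for all $h\in W^{1,p}(\Omega)$. Choosing $h=-u_0^-$ and using (\ref{eq6}) (so $g_+(z,\cdot)$ vanishes on $(-\infty,0]$) yields $\gamma(u_0^-)+\mu\|u_0^-\|_p^p\le0$, and since $\mu>\|\xi\|_\infty$ this forces $u_0^-=0$, i.e.\ $u_0\ge0$, $u_0\ne0$. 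Then by the definition (\ref{eq6}) of $g_+$ the equation for $u_0$ becomes exactly the weak form of (\ref{eq1}), so $u_0$ is a nonnegative weak solution. Nonlinear regularity (Lieberman \cite{12}) gives $u_0\in C_+\setminus\{0\}$, and the nonlinear strong maximum principle (Gasinski \& Papageorgiou \cite[p.~738]{7}), applied using $-\Delta_p u_0+(\|\xi\|_\infty+\mu)u_0^{p-1}\ge0$, yields $u_0\in D_+$. The symmetric argument with $\hat\varphi_-$ and (\ref{eq7}), testing along $-t\hat u_1$, produces $v_0\in-D_+$, which completes the proof.

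I expect the main obstacle to be the mountain pass geometry at infinity, i.e.\ verifying $\hat\varphi_+(t\hat u_1)\to-\infty$: because the problem is exactly resonant at $\hat\lambda_m$ the leading $t^p$ terms must be handled by comparing $\gamma(\hat u_1)=\hat\lambda_1\|\hat u_1\|_p^p=\hat\lambda_1$ (the $L^p$-normalisation) against $\hat\lambda_m$, and the surviving gap $\hat\lambda_1-\hat\lambda_m<0$ is what drives the functional to $-\infty$ — one must be careful that the $O(\|u\|^{p-1})$ error from the asymptotic condition $H(f)(ii)$ and the constant from $H(f)(iii)$ do not interfere. Everything else (testing with $u_0^-$, regularity, strong maximum principle) is routine given the results already established.
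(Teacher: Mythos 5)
Your overall strategy is exactly the paper's: mountain pass for the truncated functionals $\hat{\varphi}_{\pm}$ (C-condition from Proposition \ref{prop4}, strict local minimum at $0$ from Proposition \ref{prop6}, decay along $t\hat{u}_1$ from $H(f)(ii)$), then sign information by testing with $-u_0^-$, nonlinear regularity, and the strong maximum principle. Two of your computations, however, are off as written. First, in the estimate of $\hat{\varphi}_+(t\hat{u}_1)$ you keep the term $\frac{\mu t^p}{p}\|\hat{u}_1\|_p^p$ but do not subtract the matching contribution coming from $G_+$: by (\ref{eq6}), $G_+(z,x)=F(z,x)+\frac{\mu}{p}x^p$ for $x>0$, so the two $\mu$-terms cancel exactly and one is left with $\hat{\varphi}_+(t\hat{u}_1)=\frac{t^p}{p}\hat{\lambda}_1-\int_\Omega F(z,t\hat{u}_1)\,dz\leq\frac{t^p}{p}(\hat{\lambda}_1-\hat{\lambda}_m+\epsilon)+c_\epsilon|\Omega|$. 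With the spurious $+\mu$ in your bracket the expression $\hat{\lambda}_1+\mu-\hat{\lambda}_m+\epsilon$ need not be negative (recall $\mu>\|\xi\|_\infty$ is only bounded below), so the conclusion $\hat{\varphi}_+(t\hat{u}_1)\to-\infty$ would not follow; your closing remark about the gap $\hat{\lambda}_1-\hat{\lambda}_m<0$ shows you know the right mechanism, but the display must reflect the cancellation.

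Second, in the maximum principle step the inequality $-\Delta_pu_0+(\|\xi\|_\infty+\mu)u_0^{p-1}\geq 0$ is not justified: it amounts to $f(z,u_0)+\mu u_0^{p-1}\geq 0$, and nothing forces the truncation parameter $\mu$ (chosen only to exceed $\|\xi\|_\infty$) to dominate $-f(z,x)/x^{p-1}$ on $[0,\|u_0\|_\infty]$. The paper instead first gets $u_0\in L^\infty(\Omega)$ (via \cite{22}), sets $\rho_0=\|u_0\|_\infty$, and uses $H(f)(i),(iv)$ to produce a constant $\hat{\xi}_{\rho_0}>0$ with $f(z,x)x+\hat{\xi}_{\rho_0}|x|^p\geq 0$ for $|x|\leq\rho_0$, which yields $\Delta_pu_0\leq(\|\xi\|_\infty+\hat{\xi}_{\rho_0})u_0^{p-1}$ and then $u_0\in D_+$. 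Both issues are local repairs rather than conceptual gaps; with those corrections your argument coincides with the paper's proof (your shortcut of reading $m_\rho>0$ directly off (\ref{eq41}) instead of invoking the isolated-critical-point argument is fine).
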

\begin{proof}
	Using (\ref{eq6}) and (\ref{eq7}) and the nonlinear regularity theory (see Lieberman \cite[Theorem 2]{12}), we can easily check that
	\begin{equation}\label{eq42}
		K_{\hat{\varphi}_+}\subseteq C_+\ \mbox{and}\ K_{\hat{\varphi}_-}\subseteq-C_+.
	\end{equation}
	
	So, we may assume that $u=0$ is an isolated critical point of $\hat{\varphi}_{\pm}$ or otherwise we already have whole sequence of distinct smooth positive and negative solutions of (\ref{eq1}) which as we will see in the last part of this proof, using the nonlinear strong maximum principle (see, for example, Gasinski \& Papageorgiou \cite[p. 738]{7}), belong in $D_+$ and in $-D_+$, respectively. Thus we are done.
	
	Because $u=0$ is an isolated critical point of $\hat{\varphi}_+$ and a local minimizer of $\hat{\varphi}_+$ (see Proposition \ref{prop6}), we can find $\rho\in(0,1)$ small such that
	\begin{equation}\label{eq43}
		\hat{\varphi}_+(0)=0<\inf[\hat{\varphi}_+(u):||u||=\rho]=\hat{m}^+_{\rho}
	\end{equation}
	(see Aizicovici, Papageorgiou \& Staicu \cite{1}, proof of Proposition 29).
	
	Hypothesis $H(f)(ii)$ implies that
	\begin{equation}\label{eq44}
		\hat{\varphi}_+(t\hat{u}_1)\rightarrow-\infty\ \mbox{as}\ t\rightarrow+\infty\ (\mbox{recall}\ m\geq 2)
	\end{equation}
	
	From (\ref{eq43}), (\ref{eq44}) and Proposition \ref{prop4} we see that we can apply Theorem \ref{th1} (the mountain pass theorem) and find $u_0\in W^{1,p}(\Omega)$ such that
	\begin{eqnarray}\label{eq45}
		&&u_0\in K_{\hat{\varphi}_+}\subseteq C_+\ \mbox{(see (\ref{eq42})) and}\ \hat{\varphi}_+(0)=0<\hat{m}^+_{\rho}\leq\hat{\varphi}_+(u_0)\ (\mbox{see (\ref{eq43})}),\\
		&\Rightarrow&u_0\neq 0\ \mbox{is a solution of (\ref{eq1})}.\nonumber
	\end{eqnarray}
	
	From Papageorgiou \& R\u{a}dulescu \cite{22} we know that $u_0\in L^{\infty}(\Omega)$. Let $\rho_0=||u_0||_{\infty}$. Hypotheses $H(f)(i),(iv)$ imply that we can find $\hat{\xi}_{\rho_0}>0$ such that
	\begin{equation}\label{eq46}
		f(z,x)x+\hat{\xi}_{\rho_0}|x|^p\geq 0\ \mbox{for almost all}\ z\in\Omega,\ \mbox{all}\ |x|\leq\rho_0.
	\end{equation}
	
	So, we have (see Papageorgiou \& R\u{a}dulescu \cite{21})
	\begin{eqnarray*}
		&&-\Delta_pu_0(z)+(\xi(z)+\hat{\xi}_{\rho_0})u_0(z)^{p-1}=f(z,u_0(z))+\hat{\xi}_{\rho_0}u_0(z)^{p-1}\geq 0\ \mbox{for almost all}\ z\in\Omega,\\
		&&\Rightarrow\Delta_pu_0(z)\leq(||\xi||_{\infty}+\hat{\xi}_{\rho_0})u_0(z)^{p-1}\ \mbox{for almost all}\ z\in\Omega,\\
		&&\Rightarrow u_0\in D_+\ \mbox{(by the nonlinear strong maximum principle, see \cite[p. 738]{7})}.
	\end{eqnarray*}
	
	In a similar fashion, working this time with the functional $\hat{\varphi}_-$, we produce a negative smooth solution $v_0\in-D_+$ for problem (\ref{eq1}).
\end{proof}

\section{Three Solutions Theorem}

In this section, using Morse theoretic tools (critical groups), we establish the existence of a third nontrivial smooth and thus prove the three solutions theorem for problem (\ref{eq1}) under conditions of resonance.

We start by examining the critical groups of $\varphi$ at infinity.
\begin{prop}\label{prop8}
	If hypotheses $H(\xi),H(\beta),H(f)$ hold and $K_{\varphi}$ is finite, then $C_m(\varphi,\infty)\neq 0$.
\end{prop}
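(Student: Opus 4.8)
The plan is to compute the critical groups of $\varphi$ at infinity by exploiting the fact that, because of $H(f)(ii)$, near $\pm\infty$ the functional $\varphi$ is a lower-order perturbation of the $p$-homogeneous eigenvalue-type functional
$$\psi(u)=\frac{1}{p}\gamma(u)-\frac{\hat\lambda_m}{p}||u||^p_p\qquad(u\in W^{1,p}(\Omega)),$$
and then to read off the answer from the Ljusternik--Schnirelmann characterization of $\hat\lambda_m$ through the Fadell--Rabinowitz cohomological index. Observe first that the critical groups $C_k(\varphi,\infty)$ are well defined for all $k\in\NN_0$: since $K_\varphi$ is finite we have $\inf\varphi(K_\varphi)>-\infty$, and $\varphi$ satisfies the C-condition by Proposition \ref{prop5}.

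The argument would split into three steps. \emph{Step 1 (behaviour along rays).} Writing $F(z,x)=\frac{\hat\lambda_m}{p}|x|^p+e(z,x)$, hypothesis $H(f)(ii)$ gives $e(z,x)/|x|^p\to0$ uniformly in $z$ as $|x|\to\infty$, while $H(f)(iii)$ (together with (\ref{eq33})) provides a one-sided control on $e$ along the resonant directions; hence for $u\ne0$ and $t>0$,
$$\varphi(tu)=\frac{t^p}{p}\big[\gamma(u)-\hat\lambda_m||u||^p_p\big]-\int_\Omega e(z,tu)\,dz,$$
so that $\varphi(tu)\to-\infty$ whenever $\gamma(u)<\hat\lambda_m||u||^p_p$ (this is the mechanism already behind (\ref{eq44}), using $\hat\lambda_1<\hat\lambda_m$), $H(f)(iii)$ fixes the behaviour on the resonant set $\{\gamma(u)=\hat\lambda_m||u||^p_p\}$, and on the complementary cone $\{\gamma(u)>\hat\lambda_m||u||^p_p\}$ the profile $t\mapsto\varphi(tu)$ stays bounded below. \emph{Step 2 (deformation).} I would combine Step 1 with the C-condition and the second deformation theorem to show that, for $c<\inf\varphi(K_\varphi)$ sufficiently negative, the sublevel set $\varphi^c$ deformation retracts onto the relevant part of the above cone splitting, and hence --- after the radial retraction onto the $L^p$-sphere $S_p=\{u\in W^{1,p}(\Omega):||u||_p=1\}$ --- is homotopy equivalent to the sublevel set $E_m\subseteq S_p$ of $\gamma|_{S_p}$ associated with the value $\hat\lambda_m$. \emph{Step 3 (homology via the index).} By the cohomological-index construction of the eigenvalue sequence (Cingolani \& Degiovanni \cite{4}; see also \cite{6}), the strict inequalities $\hat\lambda_{m-1}<\hat\lambda_m<\hat\lambda_{m+1}$ determine the Fadell--Rabinowitz index of $E_m$, and the standard relation between that index and singular homology gives $\tilde{H}_{m-1}(E_m)\ne0$. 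Since $W^{1,p}(\Omega)$ is contractible, the exact homology sequence of the pair $(W^{1,p}(\Omega),\varphi^c)$ then yields
$$C_m(\varphi,\infty)=H_m\big(W^{1,p}(\Omega),\varphi^c\big)\cong\tilde{H}_{m-1}(\varphi^c)\cong\tilde{H}_{m-1}(E_m)\ne0,$$
which is the assertion.

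The crux is Step 2. The radial profiles $t\mapsto\varphi(tu)$ need not be monotone, and, more seriously, the ``descent time'' at which $\varphi(tu)$ falls below the level $c$ is not uniform in $u$: it blows up as $\gamma(u)/||u||^p_p\uparrow\hat\lambda_m$. The deformation retraction of $\varphi^c$ onto $E_m$ must therefore be constructed with care near the resonant set, precisely where $H(f)(iii)$ is decisive, and one must check that this behaviour at infinity produces no spurious critical-group contribution. A convenient way to organize the argument is to invoke homotopy invariance of the critical groups at infinity along a homotopy joining $\varphi$ to $\psi$ (corrected by a Landesman--Lazer term so that the interpolation stays resonant with the right sign), which reduces the computation to the exactly $p$-homogeneous $\psi$; the price is a \emph{uniform} C-condition for the homotopy, established by the rescaling scheme of Propositions \ref{prop4} and \ref{prop5}: pass to $y_n=u_n/||u_n||$, use the $(S)_+$-property of $A$ (Proposition \ref{prop2}) to get $y_n\to y$ with $||y||=1$, identify $y$ as an eigenfunction of (\ref{eq2}), and exclude it via $H(f)(iii)$ exactly as in (\ref{eq26})--(\ref{eq34}). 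In either route the essential input beyond the variational and Morse-theoretic machinery is the cohomological-index spectral theory of $u\mapsto-\Delta_pu+\xi(z)|u|^{p-2}u$ recalled in Section 2.
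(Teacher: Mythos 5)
Your overall architecture --- replace $\varphi$ by a $p$-homogeneous model via a homotopy whose uniform compactness is checked by the rescaling/$(S)_+$ scheme, then read off $C_m$ from the Fadell--Rabinowitz index of a sublevel set on a sphere through a long exact sequence --- is exactly the paper's strategy (including its ``alternative approach'' via reduced homology). But there is a genuine gap in your choice of model functional: you take $\psi(u)=\frac{1}{p}\gamma(u)-\frac{\hat\lambda_m}{p}\|u\|_p^p$, i.e.\ the \emph{resonant} value $\hat\lambda_m$. This $\psi$ has $K_\psi\supseteq\{$the eigencone of $\hat\lambda_m\}\neq\{0\}$, does not satisfy the C-condition, and its critical groups at infinity cannot be identified with $C_k(\psi,0)$ --- so the endpoint of your homotopy is no easier to handle than $\varphi$ itself. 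Your proposed fix (a Landesman--Lazer correction term) is vague and unnecessary. The paper's decisive move is to take $\lambda\in(\hat\lambda_m,\hat\lambda_{m+1})\setminus\hat\sigma(p)$ \emph{strictly inside the spectral gap and off the spectrum}, and set $\psi(u)=\frac{1}{p}\gamma(u)-\frac{\lambda}{p}\|u\|_p^p$. Then $K_\psi=\{0\}$, $C_k(\psi,\infty)=C_k(\psi,0)$, and $C_m(\psi,0)\neq 0$ follows from the homological linking of the sets $C_r=\{\gamma(u)<\lambda\|u\|_p^p,\ \|u\|=r\}$ and $E=\{\gamma(u)\geq\lambda\|u\|_p^p\}$, both of index $m$ (Cingolani--Degiovanni). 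Along the homotopy $h_t=(1-t)\varphi+t\psi$ the rescaled limit solves the eigenvalue problem with parameter $\lambda_t=(1-t)\hat\lambda_m+t\lambda\in[\hat\lambda_m,\hat\lambda_{m+1})$; if $\lambda_t\notin\hat\sigma(p)$ one gets $y=0$ immediately, and only when $\lambda_t\in\hat\sigma(p)$ does one invoke $H(f)(iii)$ together with the condition $h_{t_n}(u_n)\to-\infty$, exactly as you sketch. With $\lambda=\hat\lambda_m$ this dichotomy collapses and the argument does not close.

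A related inaccuracy sits in your Step 3: the nonvanishing of $\tilde H_{m-1}$ holds for the sublevel set $\{u\in S_p:\gamma(u)\leq\lambda\}$ with $\lambda$ \emph{strictly between} $\hat\lambda_m$ and $\hat\lambda_{m+1}$ (where the index equals $m$ on both sides of the linking pair); at the exact value $\hat\lambda_m$ the index may already have jumped and the statement is not the one furnished by \cite{4} or \cite{23}. Finally, your primary route in Step 2 (a direct deformation retraction of $\varphi^c$ onto a sublevel set of $\gamma|_{S_p}$) is, as you yourself note, not viable because of the non-uniform descent times near the resonant cone; the paper avoids it entirely by the homotopy-invariance theorem of Chang (Theorem 5.1.21), applied once the uniform lower bound $(1+\|u\|)\|(h_t)'(u)\|_*\geq\delta_0$ on $\{h_t\leq\eta\}$ is established.
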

\begin{proof}
	Let $\lambda\in(\hat{\lambda}_m,\hat{\lambda}_{m+1})\backslash\hat{\sigma}(p)$ and consider the $C^1$-functional $\psi:W^{1,p}(\Omega)\rightarrow\RR$ defined by
	$$\psi(u)=\frac{1}{p}\gamma(u)-\frac{\lambda}{p}||u||^p_p\ \mbox{for all}\ u\in W^{1,p}(\Omega).$$
	
	Consider the homotopy $h(t,u)=h_t(u)$ defined by
	$$h_t(u)=(1-t)\varphi(u)+t\psi(u)\ \mbox{for all}\ (t,u)\in[0,1]\times W^{1,p}(\Omega).$$
	\begin{claim}	There exist $\eta\in\RR$ and $\delta_0>0$ such that
		$$h_t(u)\leq\eta\Rightarrow(1+||u||)||(h_t)'(u)||_*\geq\delta_0\ \mbox{for all}\ t\in[0,1].$$
	\end{claim}
	
	We argue indirectly. So, suppose that the claim is not true. Because the function $(t,u)\mapsto h_t(u)$ maps bounded sets into bounded sets, we can find $\{t_n\}_{n\geq 1}\subseteq[0,1]$ and $\{u_n\}_{n\geq 1}\subseteq W^{1,p}(\Omega)$ such that
	\begin{equation}\label{eq47}
		t_n\rightarrow t,||u_n||\rightarrow\infty,h_{t_n}(u_n)\rightarrow-\infty\ \mbox{and}\ (1+||u_n||)(h_{t_n})'(u_n)\rightarrow 0\ \mbox{in}\ W^{1,p}(\Omega)^*.
	\end{equation}
	
	From the last convergence in (\ref{eq47}) we have
	\begin{eqnarray}\label{eq48}
		&&\left|\left\langle A(u_n),h\right\rangle+\int_{\Omega}\xi(z)|u_n|^{p-2}u_nhdz+\int_{\partial\Omega}\beta(z)|u_n|^{p-2}u_nhd\sigma\right.-\nonumber\\
		 &&\left.(1-t_n)\int_{\Omega}f(z,u_n)hdz-t_n\int_{\Omega}\lambda|u_n|^{p-2}u_nhdz\right|\leq\frac{\epsilon_n||h||}{1+||u_n||}\\
		&&\mbox{for all}\ h\in W^{1,p}(\Omega)\ \mbox{with}\ \epsilon_n\rightarrow 0^+.\nonumber
	\end{eqnarray}
	
	We set $y_n=\frac{u_n}{||u_n||}\ n\in\NN$. Then $||y_n||=1$ for all $n\in\NN$ and so by passing to a suitable subsequence if necessary, we may assume that
	\begin{equation}\label{eq49}
		y_n\stackrel{w}{\rightarrow}y\ \mbox{in}\ W^{1,p}(\Omega)\ \mbox{and}\ y_n\rightarrow y\ \mbox{in}\ L^p(\Omega)\ \mbox{and in}\ L^p(\partial\Omega).
	\end{equation}
	
	From (\ref{eq48}) we have
	\begin{eqnarray}\label{eq50}
		&&\left|\left\langle A(y_n),h\right\rangle+\int_{\Omega}\xi(z)|y_n|^{p-2}y_nhdz+\int_{\partial\Omega}\beta(z)|y_n|^{p-2}y_nhd\sigma-(1-t_n)\int_{\Omega}\frac{N_f(u_n)}{||u_n||^{p-1}}hdz-\right.\nonumber\\
		 &&\left.\quad t_n\int_{\Omega}\lambda|u_n|^{p-2}y_nhdz\right|\leq\frac{\epsilon_n||h||}{(1+||u_n||)||u_n||^{p-1}}\ \mbox{for all}\ n\in\NN.
	\end{eqnarray}
	
	From (\ref{eq5}) we see that
	\begin{equation}\label{eq51}
		\left\{\frac{N_f(u_n)}{||u_n||^{p-1}}\right\}_{n\geq 1}\subseteq L^{p'}(\Omega)\ \mbox{is bounded}.
	\end{equation}
	
	From hypothesis $H(f)(ii)$, (\ref{eq51}) and by passing to a subsequence if necessary, we have
	\begin{equation}\label{eq52}
		\frac{N_f(u_n)}{||u_n||^{p-1}}\stackrel{w}{\rightarrow}\hat{\lambda}_m|y|^{p-2}y\ \mbox{in}\ L^{p'}(\Omega)
	\end{equation}
	(see Aizicovici, Papageorgiou \& Staicu \cite{1}, proof of Proposition 30).
	
	In (\ref{eq50}) we choose $h=y_n-y\in W^{1,p}(\Omega)$, pass to the limit as $n\rightarrow\infty$ and use (\ref{eq49}), (\ref{eq52}). We obtain
	\begin{eqnarray}\label{eq53}	
		&&\lim\limits_{n\rightarrow\infty}\left\langle A(y_n),y_n-y\right\rangle=0,\nonumber\\
		&\Rightarrow&y_n\rightarrow y\ \mbox{in}\ W^{1,p}(\Omega)\ \mbox{(see Proposition \ref{prop2}), hence}\ ||y||=1.
	\end{eqnarray}
	
	Therefore, if in (\ref{eq50}) we pass to the limit as $n\rightarrow\infty$ and use (\ref{eq52}) and (\ref{eq53}), then
	\begin{eqnarray}\label{eq54}
		&&\left\langle A(y),h\right\rangle+\int_{\Omega}\xi(z)|y|^{p-2}yhdz+\int_{\partial\Omega}\beta(z)|y|^{p-2}yhd\sigma\nonumber\\
		&&=\int_{\Omega}[(1-t)\hat{\lambda}_m+t\lambda]\,|y|^{p-2}yhdz\ \mbox{for all}\ h\in W^{1,p}(\Omega),\nonumber\\
		&\Rightarrow&-\Delta_py(z)+\xi(z)|y(z)|^{p-2}y(z)=\lambda_t|y(z)|^{p-2}y(z)\ \mbox{for almost all}\ z\in\Omega,\\
		&&\frac{\partial y}{\partial n_{p}}+\beta(z)|y|^{p-2}y=0\ \mbox{on}\ \partial\Omega\nonumber
	\end{eqnarray}
	with $\lambda_t=(1-t)\hat{\lambda}_m+t\lambda$ (see Papageorgiou \& R\u{a}dulescu \cite{21}).
	Note that $\lambda_t\in\left[\hat{\lambda}_m,\hat{\lambda}_{m+1}\right)$.
	
	If $\lambda_t\notin\hat{\sigma}(p)$, then from (\ref{eq54}) we infer that $y=0$, a contradiction to (\ref{eq53}).
	
	If $\lambda_t\in\hat{\sigma}(p)$, then from (\ref{eq53}) we see that for $D=\{z\in\Omega:y(z)\neq 0\}$ we have $|D|_N>0$. Also, we can say that
	\begin{eqnarray}\label{eq55}
		&&|u_n(z)|\rightarrow+\infty\ \mbox{for all}\ z\in D,\nonumber\\
		&\Rightarrow&f(z,u_n(z))u_n(z)-pF(z,u_n(z))\rightarrow+\infty\ \mbox{for almost all}\ z\in D,\nonumber\\
		&\Rightarrow&\int_D[f(z,u_n)u_n-pF(z,u_n)]dz\rightarrow+\infty\\
		&&\hspace{1cm}(\mbox{by Fatou's lemma, see hypothesis}\ H(f)(iii)).\nonumber
	\end{eqnarray}
	
	Hypotheses $H(f)(i),(ii)$ imply that we can find $c_9>0$ such that
	\begin{equation}\label{eq56}
		-c_9\leq f(z,x)x-pF(z,x)\ \mbox{for almost all}\ z\in\Omega,\ \mbox{all}\ x\in\RR.
	\end{equation}
	
	We have
	\begin{eqnarray}\label{eq57}
		&&\int_{\Omega}[f(z,u_n)u_n-pF(z,u_n)]dz\nonumber\\
		&&=\int_D[f(z,u_n)u_n-pF(z,u_n)]dz+\int_{\Omega\backslash D}[f(z,u_n)u_n-pF(z,u_n)]dz\nonumber\\
		&&\geq\int_D[f(z,u_n)u_n-pF(z,u_n)]dz+c_9|\Omega\backslash D|_N\ \mbox{(see (\ref{eq56}))}\nonumber\\
		&\Rightarrow&\int_{\Omega}[f(z,u_n)u_n-pF(z,u_n)]dz\rightarrow+\infty\ \mbox{as}\ n\rightarrow\infty\ (\mbox{see (\ref{eq55})}).
	\end{eqnarray}
	
	On the other hand, from the third convergence in (\ref{eq47}), we see that we can find $n_0\in\NN$ such that
	\begin{equation}\label{eq58}
		\gamma(u_n)-(1-t_n)\int_{\Omega}pF(z,u_n)dz-t_n\lambda||u_n||^p_p\leq-1\ \mbox{for all}\ n\geq n_0.
	\end{equation}
	
	Also, if in (\ref{eq48}) we choose $h=u_n\in W^{1,p}(\Omega)$, then
	\begin{equation}\label{eq59}
		-\gamma(u_n)+(1-t_n)\int_{\Omega}f(z,u_n)u_ndz+t\lambda||u_n||^p_p\leq\epsilon_n\ \mbox{for all}\ n\in\NN.
	\end{equation}
	
	By taking $n_0\in\NN$ even bigger if necessary, we may assume that $\epsilon_n\in(0,1)$ for all $n\geq n_0$ (recall that $\epsilon_n\rightarrow 0^+$). Adding (\ref{eq58}) and (\ref{eq59}), we obtain
	\begin{equation}\label{eq60}
		(1-t_n)\int_{\Omega}[f(z,u_n)u_n-pF(z,u_n)]dz\leq 0\ \mbox{for all}\ n\geq n_0.
	\end{equation}
	
	Note that we can assume that $t_n\in\left[0,1\right)$ for all $n\geq n_0$. Indeed, if there is a subsequence $\{t_{n_k}\}_{k\geq 1}$ of $\{t_n\}_{n\geq 1}$ such that $t_{n_k}=1$ for all $k\in\NN$, then $t=1$ (see (\ref{eq47})) and from the previous argument we have
	\begin{eqnarray*}
		&&-\Delta_py(z)+\xi(z)|y(z)|^{p-2}y(z)=\lambda|y(z)|^{p-2}y(z)\ \mbox{for almost all}\ z\in\Omega,\\
		&&\frac{\partial y}{\partial n_p}+\beta(z)|y|^{p-2}y=0\ \mbox{on}\ \partial\Omega,\\
		&\Rightarrow&y=0\ (\mbox{since}\ \lambda\notin\hat{\sigma}(p)),
	\end{eqnarray*}
	which contradicts (\ref{eq53}).
	
	So, $t_n\in\left[0,1\right)$ for all $n\geq n_0$ and from (\ref{eq60}) we have
	\begin{equation}\label{eq61}
		\int_{\Omega}[f(z,u_n)u_n-pF(z,u_n)]dz\leq 0\ \mbox{for all}\ n\geq n_0.
	\end{equation}
	
	Comparing (\ref{eq57}) and (\ref{eq61}) we reach a contradiction. This proves the claim.
	
	A careful reading of the proof of the claim, reveals that the same argument with minor changes, shows that for every $t\in[0,1], h_t(\cdot)$ satisfies the C-condition. So, we can apply Theorem 5.1.21 of Chang \cite[p. 334]{3} (with $a=\eta,b\equiv+\infty$, see also Liang \& Su \cite[Proposition 3.2]{11}) and infer that
	\begin{eqnarray}\label{eq62}
		&&C_k(h_0,\infty)=C_k(h_1,\infty)\ \mbox{for all}\ k\in\NN_0,\nonumber\\
		&\Rightarrow&C_k(\varphi,\infty)=C_k(\psi,\infty)\ \mbox{for all}\ k\in\NN_0.
	\end{eqnarray}
	
	We introduce the following two sets
	\begin{eqnarray*}
		&&C_r=\{u\in W^{1,p}(\Omega):\gamma(u)<\lambda||u||^p_p,||u||=r\},\\
		&&E=\{u\in W^{1,p}(\Omega):\gamma(u)\geq\lambda||u||^p_p\}.
	\end{eqnarray*}
	
	Both are symmetric sets and $C_r\cap E=\emptyset$. The set $\partial B_r=\{u\in W^{1,p}(\Omega):||u||=r\}$ is a Banach $C^1$-manifold and so it is locally contractible. The set $C_r$ is an open subset of $\partial B_r$, so it is locally contractible, too. Evidently, the open set $W^{1,p}(\Omega)\backslash E$ is locally contractible. Since $\lambda\in(\hat{\lambda}_m,\hat{\lambda}_{m+1})\backslash\hat{\sigma}(p)$, we have
	$${\rm ind}\, C_r={\rm ind}\, (W^{1,p}(\Omega)\backslash E)=m,$$
	and recall that ind$\,(\cdot)$ denotes the Fadell-Rabinowitz  cohomological index, see \cite{6}. Moreover, from Theorem 3.6 of Cingolani \& Degiovanni \cite{4}, we know that the sets $C_r$ and $E$ homologically link in dimension $m$. So, we can apply Theorem 3.2 of Cingolani \& Degiovanni \cite{4} and infer that
	\begin{equation}\label{eq63}
		C_m(\psi,0)\neq 0.
	\end{equation}
	
	But since $\lambda\in(\hat{\lambda}_m,\hat{\lambda}_{m+1})\backslash\hat{\sigma}(p)$, we have
	\begin{eqnarray*}
		&&K_{\psi}=\{0\},\\
		&\Rightarrow&C_k(\psi,0)=C_k(\psi,\infty)\ \mbox{for all}\ k\in\NN_0\\
		&&(\mbox{see Motreanu, Motreanu \& Papageorgiou \cite{16}, Proposition 6.61, p. 160}),\\
		&\Rightarrow&C_m(\psi,\infty)\neq 0\ (\mbox{see (\ref{eq63})}),\\
		&\Rightarrow&C_m(\varphi,\infty)\neq 0\ (\mbox{see (\ref{eq62})}).
	\end{eqnarray*}
\end{proof}

In what follows we outline an alternative approach to showing that $C_m(\varphi,\infty)\neq 0$.

Note that the $p$-homogeneity of $\psi$ implies that
\begin{equation}\label{eq64}
	\psi^0\ \mbox{is contractible}
\end{equation}
(just use the radial contraction). In a similar way, we can see that
\begin{equation}\label{eq65}
	\psi^0\backslash\{0\}\ \mbox{is homotopy equivalent to}\ \psi^0\cap\partial B^{L^p}_1.
\end{equation}

Here, $\partial B^{L^p}_1=\{u\in L^p(\Omega):||u||_p=1\}$. Let $\ast\in\psi^0\backslash\{0\}$ and consider the following triple of sets
$$\{\ast\}\subseteq\psi^0\backslash\{0\}\subseteq\psi^0.$$

For this triple we consider the corresponding ``reduced" long exact sequence of singular homology groups (see Motreanu, Motreanu \& Papageorgiou \cite{16}, Proposition 6.21, p. 146).
\begin{equation}\label{eq66}
	\cdots\rightarrow H_k(\psi^0,\ast)\xrightarrow{j_*}H_k(\psi^0,\psi^0\backslash\{0\})\xrightarrow{\hat{\partial}_*}H_{k-1}(\psi^0\backslash\{0\},\ast)\rightarrow\cdots\ \quad k\in\NN,
\end{equation}
where $j_*$ is the homomorphism induced by the inclusion $(\psi^0,\ast){\overset{j}\hookrightarrow}(\psi^0,\psi^0\backslash\{0\})$ and $\hat{\partial}_*$ is the composed boundary homomorphism (see Motreanu, Motreanu \& Papageorgiou \cite{16}, Proposition 6.14, p. 143). From (\ref{eq64}) we have
\begin{equation}\label{eq67}
	H_k(\psi^0,\ast)=0\ \mbox{for all}\ k\in\NN_0
\end{equation}
(see Motreanu, Motreanu \& Papageorgiou \cite{16}, Proposition 6.24, p. 147). Also, from the exactness of (\ref{eq66}) we have
\begin{eqnarray}\label{eq68}
	&&{\rm ker}\,\hat{\partial}_*={\rm im}\,j_*=0\ (\mbox{see (\ref{eq67})}),\nonumber\\
	&\Rightarrow&\hat{\partial}_*\ \mbox{is a homomorphism onto}\ H_{k-1}(\psi^0\backslash\{0\},\ast)\ (\mbox{see (\ref{eq66})}),\nonumber\\
	&\Rightarrow&H_k(\psi^0,\psi^0\backslash\{0\})=\tilde{H}_{k-1}(\psi^0\cap\partial B^{L^p}_1)\ (\mbox{see (\ref{eq65})}).
\end{eqnarray}

Here, $\tilde{H}_{k-1}(\psi^0\cap\partial B^{L^p}_1)$ denotes the reduced homology group.

Since $\psi^0\cap\partial B^{L^p}_1=\{u\in W^{1,p}(\Omega):\gamma(u)\leq\lambda||u||^p_p,||u||_p=1\}$ and $\lambda\in(\hat{\lambda}_m,\hat{\lambda}_{m+1})\backslash\hat{\sigma}(p)$ as in Perera \cite{23}, we can show that
\begin{eqnarray*}
	&&\tilde{H}_{m-1}(\psi^0\cap\partial B^{L^p}_1)\neq 0,\\
	&\Rightarrow&H_m(\psi^0,\psi^0\backslash\{0\})\neq 0\ (\mbox{see (\ref{eq68})}),\\
	&\Rightarrow&C_m(\psi,0)\neq 0,\\
	&\Rightarrow&C_m(\psi,\infty)\neq 0\ (\mbox{recall that}\ C_m(\psi,0)=C_m(\psi,\infty)\ \mbox{since}\ K_{\psi}=\{0\}),\\
	&\Rightarrow&C_m(\varphi,\infty)\neq 0\ (\mbox{see (\ref{eq62})}).
\end{eqnarray*}

\medskip
We also compute the critical groups at infinity for the functionals $\hat{\varphi}_{\pm}$.
\begin{prop}\label{prop9}
	If hypotheses $H(\xi),H(\beta),H$ hold and $K_{\hat{\varphi}_{\pm}}$ are finite, then $C_k(\hat{\varphi}_{\pm},\infty)=0$ for all $k\in\NN_0$.
\end{prop}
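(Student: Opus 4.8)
The plan is to show that $\hat{\varphi}_{+}$ has trivial critical groups at infinity by constructing an explicit deformation retraction of the space onto a sublevel set, exploiting the one-sided structure of $g_+$ together with the asymptotic resonance condition $H(f)(ii)$. First I would record that, since $g_+(z,x)=0$ for $x\le0$, the functional $\hat{\varphi}_{+}$ restricted to the negative cone is purely quadratic-type in the $\gamma$-norm, and more importantly that hypothesis $H(f)(ii)$ combined with $H(f)(iii)$ yields the key asymptotic estimate: for $u=t\hat{u}_1$ with $t\to+\infty$ we have $\hat{\varphi}_{+}(t\hat{u}_1)\to-\infty$ (this is essentially (\ref{eq44}) from the previous proof). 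The strategy is then to fix a level $c<\inf\hat{\varphi}_{+}(K_{\hat{\varphi}_{+}})$ (which is finite since $K_{\hat{\varphi}_{+}}$ is finite and $\hat{\varphi}_{+}$ satisfies the C-condition by Proposition \ref{prop4}) and show that the sublevel set $\hat{\varphi}_{+}^{c}$ is contractible, and that in fact the pair $(W^{1,p}(\Omega),\hat{\varphi}_{+}^{c})$ is homotopy-trivial, which forces $C_k(\hat{\varphi}_{+},\infty)=H_k(W^{1,p}(\Omega),\hat{\varphi}_{+}^{c})=0$ for all $k$.

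The main technical step is the deformation. I would use a two-stage homotopy. In the first stage, push every $u\in W^{1,p}(\Omega)$ along the ray $u\mapsto u+s\hat{u}_1$, $s\ge0$; the resonance-from-below estimate coming from $H(f)(ii)$-$(iii)$ (namely that $\hat{\varphi}_{+}(u+s\hat{u}_1)\to-\infty$ as $s\to+\infty$, uniformly for $u$ in bounded sets, using that $\hat{u}_1\in D_+$ so that $(u+s\hat{u}_1)^+$ grows in the $\hat{u}_1$-direction) guarantees that after a finite "time" depending continuously on $u$ we land in $\hat{\varphi}_{+}^{c}$. To make the time of first entry continuous one uses that $\hat{\varphi}_{+}$ has no critical values in $(-\infty,c]$ together with the C-condition and the standard implicit-function/continuity argument for the entry-time map (as in Motreanu–Motreanu–Papageorgiou, or Chang's deformation lemma). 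In the second stage, inside $\hat{\varphi}_{+}^{c}$ one contracts to a point: since $c<\inf\hat{\varphi}_{+}(K_{\hat{\varphi}_{+}})$, the negative gradient flow of $\hat{\varphi}_{+}$ has no rest points in $\hat{\varphi}_{+}^{c}$, and one shows that following the flow (or again a straight-line push in the $\hat{u}_1$-direction combined with radial rescaling) deformation-retracts $\hat{\varphi}_{+}^{c}$ onto a suitable fixed element such as $-s_0\hat{u}_1$ for large $s_0$, hence $\hat{\varphi}_{+}^{c}$ is contractible. Alternatively, and perhaps more cleanly, I would directly build a deformation retraction of all of $W^{1,p}(\Omega)$ onto $\hat{\varphi}_{+}^{c}$: since $\hat{\varphi}_{+}^{c}$ is then a deformation retract of a contractible (indeed, the whole) space, both $H_k(W^{1,p}(\Omega))$ and $H_k(\hat{\varphi}_{+}^{c})$ vanish in positive degree and agree in degree $0$, and the long exact sequence of the pair gives $H_k(W^{1,p}(\Omega),\hat{\varphi}_{+}^{c})=0$ for all $k\in\NN_0$.

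I expect the main obstacle to be the continuity of the first-entry-time function and the uniformity of the estimate $\hat{\varphi}_{+}(u+s\hat{u}_1)\to-\infty$ on bounded sets: one must control the term $\int_\Omega G_+(z,(u+s\hat{u}_1))\,dz$ from below using $H(f)(ii)$ (linear growth of $F$ with slope $\hat{\lambda}_m/p$) and rule out that the "loss" from the indefinite potential $\xi$ and the boundary term in $\gamma(u+s\hat{u}_1)$ cancels this gain — here one uses that along $\hat{u}_1$ the quadratic form behaves like $\hat{\lambda}_1\|\cdot\|_p^p<\hat{\lambda}_m\|\cdot\|_p^p$, so the reaction dominates and $\hat{\varphi}_{+}\to-\infty$. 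Once that estimate is in hand, the C-condition (Proposition \ref{prop4}) and the second deformation theorem make the rest routine. The proof for $\hat{\varphi}_{-}$ is identical, replacing $\hat{u}_1$ by $-\hat{u}_1$ and $g_+$ by $g_-$.
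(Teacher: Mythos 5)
Your overall strategy (show that the whole space deforms onto a low sublevel set $\hat{\varphi}_+^c$, so that $C_k(\hat{\varphi}_+,\infty)=H_k(W^{1,p}(\Omega),\hat{\varphi}_+^c)=0$) is a legitimate alternative in principle, and your anticoercivity estimate $\hat{\varphi}_+(u+s\hat{u}_1)\to-\infty$ along $\hat{u}_1$ is correct and provable from $H(f)(ii)$. But the argument has a genuine gap exactly at the point you flag as "routine": the continuity of the first-entry-time map $T(u)=\inf\{s\geq 0:\hat{\varphi}_+(u+s\hat{u}_1)\leq c\}$ does \emph{not} follow from "$\hat{\varphi}_+$ has no critical values in $(-\infty,c]$ plus the C-condition." What is needed is a transversality/monotonicity statement along the specific rays you are using, namely $\frac{d}{ds}\hat{\varphi}_+(u+s\hat{u}_1)=\langle\hat{\varphi}_+'(u+s\hat{u}_1),\hat{u}_1\rangle<0$ whenever $\hat{\varphi}_+(u+s\hat{u}_1)\leq c$. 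Absence of critical points only says $\hat{\varphi}_+'(v)\neq 0$; it says nothing about the sign of its pairing with the fixed direction $\hat{u}_1$, so $s\mapsto\hat{\varphi}_+(u+s\hat{u}_1)$ can re-enter and leave the sublevel set, $T$ can jump, and $\hat{\varphi}_+^c$ need not be invariant under your homotopy. Without invariance of $\hat{\varphi}_+^c$ you only get that the inclusion $\hat{\varphi}_+^c\hookrightarrow W^{1,p}(\Omega)$ is surjective on homology, which is vacuous since the ambient space is contractible; to conclude $H_k(W^{1,p}(\Omega),\hat{\varphi}_+^c)=0$ you must either make $\hat{\varphi}_+^c$ a (weak) deformation retract or prove directly that $\tilde{H}_*(\hat{\varphi}_+^c)=0$, and your "second stage" (negative gradient flow has no rest points, or a radial rescaling) does not deliver contractibility either: $\hat{\varphi}_+$ is not $p$-homogeneous, so no radial contraction is available, and a rest-point-free flow on an unbounded sublevel set does not contract it to a point. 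For $p=2$ the required directional monotonicity is classically extracted from $H(f)(iii)$ because the quadratic form splits along $\hat{u}_1$; for $p\neq 2$ the functional $\gamma$ is not bilinear and $\langle\hat{\varphi}_+'(v),\hat{u}_1\rangle$ is not controlled by the stated hypotheses, so this is not a minor technicality.

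For contrast, the paper sidesteps the ray-deformation entirely. It first proves a uniform estimate of the form $h_t(u)\leq\eta\Rightarrow(1+\|u\|)\|(h_t)'(u)\|_*\geq\delta_0$ for the affine homotopy $h_t=(1-t)\hat{\varphi}_++t\hat{\psi}_+$ joining $\hat{\varphi}_+$ to the model functional $\hat{\psi}_+(u)=\frac{1}{p}\gamma(u)+\frac{\mu}{p}\|u^-\|_p^p-\frac{\lambda}{p}\|u^+\|_p^p$ with $\lambda\in(\hat{\lambda}_m,\hat{\lambda}_{m+1})\setminus\hat{\sigma}(p)$; this is what licenses the identification $C_k(\hat{\varphi}_+,\infty)=C_k(\hat{\psi}_+,\infty)$ via Chang's homotopy-invariance theorem at infinity. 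It then computes $C_k(\hat{\psi}_+,0)=0$ by perturbing $\hat{\psi}_+$ with the linear term $-t\int_\Omega u\,dz$ and using the nonlinear Picone identity to show no critical points are created, and finally uses $K_{\hat{\psi}_+}=\{0\}$ to pass from $C_k(\hat{\psi}_+,0)$ to $C_k(\hat{\psi}_+,\infty)$. If you want to salvage your route, the missing ingredient you must supply is precisely the directional derivative estimate along $\hat{u}_1$ at low levels; otherwise the reduction to a homogeneous model functional is the more robust path in the quasilinear setting.
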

\begin{proof}
	We give the proof for the functional $\hat{\varphi}_+$, the proof for $\hat{\varphi}_-$ is similar.
	
	As before, let $\lambda\in(\hat{\lambda}_m,\hat{\lambda}_{m+1})\backslash\hat{\sigma}(p)$ and consider the $C^1$-functional $\hat{\psi}_+:W^{1,p}(\Omega)\rightarrow\RR$ defined by
	$$\hat{\psi}_+(u)=\frac{1}{p}\gamma(u)+\frac{\mu}{p}||u^-||^p_p-\frac{\lambda}{p}||u^+||^p_p\ \mbox{for all}\ u\in W^{1,p}(\Omega),$$
	with $\mu>||\xi||_{\infty}$ (see hypothesis $H(\xi)$). We introduce the homotopy $\hat{h}^+(t,u)=\hat{h}^+_t(u)$ defined by
	$$\hat{h}^+_t(u)=(1-t)\hat{\varphi}_+(u)+t\hat{\psi}_+(u)\ \mbox{for all}\ (t,u)\in[0,1]\times W^{1,p}(\Omega).$$
	\begin{claim}\label{cl1}
		There exist $\hat{\eta}\in\RR$ and $\hat{\delta}_0>0$ such that
		$$\hat{h}^+_t(u)\leq\hat{\eta}\Rightarrow(1+||u||)||(\hat{h}^+_t)'(u)||_*\geq\hat{\delta}_0\ \mbox{for all}\ t\in[0,1].$$
	\end{claim}
		
	We again  argue by contradiction. So, suppose that the claim is not true. Then since $(t,u)\mapsto\hat{h}^+_t(u)$ maps bounded sets to bounded sets, we can find sequences $\{t_n\}_{n\geq 1}\subseteq[0,1]$ and $\{u_n\}_{n\geq 1}\subseteq W^{1,p}(\Omega)$ such that
	\begin{equation}\label{eq69}
		t_n\rightarrow t,||u_n||\rightarrow\infty,\hat{h}^+_{t_n}(u_n)\rightarrow-\infty\ \mbox{and}\ (1+||u_n||)(\hat{h}^+_{t_n})'(u_n)\rightarrow 0\ \mbox{in}\ W^{1,p}(\Omega)^*.
	\end{equation}
	
	From the last convergence established in (\ref{eq69}), we have
	\begin{eqnarray}\label{eq70}
		&&\left|\left\langle A(u_n),h\right\rangle+\int_{\Omega}\xi(z)|u_n|^{p-2}u_nhdz+\int_{\partial\Omega}\beta(z)|u_n|^{p-2}u_nhd\sigma-\mu\int_{\Omega}(u^-_n)^{p-1}hdz\right.\nonumber\\
		 &&\left.-(1-t_n)\int_{\Omega}f(z,u^+_n)hdz-t_n\int_{\Omega}\lambda(u^+_n)^{p-1}hdz\right|\leq\frac{\epsilon_n||h||}{1+||u_n||}\\
		&&\hspace{1cm}\mbox{for all}\ h\in W^{1,p}(\Omega),\ \mbox{with}\ \epsilon_n\rightarrow 0^+\ (\mbox{see (\ref{eq6})}).\nonumber
	\end{eqnarray}
	
	In (\ref{eq70}) we choose $h=-u^-_n\in W^{1,p}(\Omega)$. Then
	\begin{eqnarray}\label{eq71}
		&&\gamma(u^-_n)+\mu||u^-_n||^p_p\leq\epsilon_n\ \mbox{for all}\ n\in\NN,\nonumber\\
		&\Rightarrow&c_{10}||u^-_n||^p\leq\epsilon_n\ \mbox{for some}\ c_{10}>0,\ \mbox{all}\ n\in\NN\ (\mbox{since}\ \mu>||\xi||_{\infty}),\nonumber\\
		&\Rightarrow&u^-_n\rightarrow 0\ \mbox{in}\ W^{1,p}(\Omega).
	\end{eqnarray}
	
	From (\ref{eq69}) we know that $||u_n||\rightarrow\infty$. Because of (\ref{eq71}) it follows that
	$$||u^+_n||\rightarrow\infty.$$
	
	Let $y_n=\frac{u^+_n}{||u^+_n||}$, $n\in\NN$. Then $||y_n||=1$ for all $n\in\NN$. So, by passing to a subsequence if necessary, we may assume that
	\begin{equation}\label{eq72}
		y_n\stackrel{w}{\rightarrow}y\ \mbox{in}\ W^{1,p}(\Omega)\ \mbox{and}\ y_n\rightarrow y\ \mbox{in}\ L^p(\Omega)\ \mbox{and in}\ L^p(\partial\Omega),y\geq 0.
	\end{equation}
	
	From (\ref{eq70}) and (\ref{eq71}) we have
	\begin{eqnarray}\label{eq73}
		&&\left|\left\langle A(y_n),h\right\rangle+\int_{\Omega}\xi(z)|y_n|^{p-2}y_nhdz+\int_{\partial\Omega}\beta(z)|y_n|^{p-2}y_nhd\sigma-(1-t_n)\int_{\Omega}\frac{N_f(u^+_n)}{||u^+_n||^{p-1}}hdz\right.\nonumber\\
		&&\hspace{1cm}\left.-t_n\int_{\Omega}\lambda y^{p-1}_nhdz\right|\leq\epsilon'_n||h||\ \mbox{for all}\ h\in W^{1,p}(\Omega)\ \mbox{with}\ \epsilon'_n\rightarrow 0^+.
	\end{eqnarray}
	
It is clear	from (\ref{eq5})  that
	\begin{equation}\label{eq74}
		\left\{\frac{N_f(u^+_n)}{||u^+_n||^{p-1}}\right\}_{n\geq 1}\subseteq L^{p'}(\Omega)\ \mbox{is bounded}.
	\end{equation}
	
	Using (\ref{eq74}) and hypothesis $H(f)(ii)$, for at least a subsequence, we have
	\begin{equation}\label{eq75}
		\frac{N_f(u^+_n)}{||u^+_n||^{p-1}}\stackrel{w}{\rightarrow}\hat{\lambda}_my^{p-1}\ \mbox{in}\ L^{p'}(\Omega).
	\end{equation}
	
	In (\ref{eq73}) we choose $h=y_n-y\in W^{1,p}(\Omega)$, pass to the limit as $n\rightarrow\infty$ and use (\ref{eq72}) and (\ref{eq75}). Then
	\begin{eqnarray}\label{eq76}
		&&\lim\limits_{n\rightarrow\infty}\left\langle A(y_n),y_n-y\right\rangle=0,\nonumber\\
		&\Rightarrow&y_n\rightarrow y\ \mbox{in}\ W^{1,p}(\Omega),\ \mbox{hence}\ ||y||=1,y\geq 0.
	\end{eqnarray}
	
	Passing to the limit as $n\rightarrow\infty$ in (\ref{eq73}) and using (\ref{eq75}) and (\ref{eq76}), we obtain
	\begin{eqnarray}\label{eq77}
		&&\left\langle A(y),h\right\rangle+\int_{\Omega}\xi(z)|y|^{p-2}yhdz+\int_{\partial\Omega}\beta(z)|y|^{p-2}yhd\sigma=(1-t)\int_{\Omega}\hat{\lambda}_my^{p-1}hdz+\nonumber\\
		&&\hspace{1cm}t\int_{\Omega}\lambda y^{p-1}hdz\ \mbox{for all}\ h\in W^{1,p}(\Omega),\nonumber\\
		&\Rightarrow&-\Delta_py(z)+\xi(z)y(z)^{p-1}=\lambda_t y(z)^{p-1}\ \mbox{for almost all}\ z\in\Omega,\nonumber \\
		&&\hspace{1cm}\frac{\partial y}{\partial n_p}+\beta(z)y^{p-1}=0\ \mbox{on}\ \partial\Omega\ \ (\mbox{see Papageorgiou \& R\u{a}dulescu \cite{21}}),
	\end{eqnarray}
	with $\lambda_t=(1-t)\hat{\lambda}_{m}+t\lambda$.
	
	We see that $\lambda_t\in\left[\hat{\lambda}_m,\hat{\lambda}_{m+1}\right)$ and since $m\geq 2$, if $\lambda_t\in\hat{\sigma}(p)$, then it must be a nonprincipal eigenvalue and so from (\ref{eq77}) it follows that
	$$y=0\ \mbox{or}\ y\ \mbox{is nodal},$$
	both contradicting (\ref{eq76}). If $\lambda_t\notin\hat{\sigma}(p)$, then $y=0$ (see (\ref{eq77})), again a contradiction.
	
	This proves Claim \ref{cl1}.
	
	Claim \ref{cl1} says that we can find $\hat{\eta}_0\leq\hat{\eta}$ such that
	\begin{equation}\label{eq78}
		\hat{\eta}_0\ \mbox{is a regular value for all}\ \hat{h}^+_t,\ \ t\in[0,1].
	\end{equation}
	
	The above argument with minor changes also shows that
	\begin{center}
		for all $t\in[0,1],\ \hat{h}^+_t$ satisfies the C-condition.
	\end{center}
	
	We apply Theorem 5.1.21 of Chang \cite[p. 334]{3} (with $a\equiv\hat{\eta}_0$ and $b\equiv+\infty$; see also Liang \& Su \cite[Proposition 3.2]{11}) and obtain
	\begin{equation}\label{eq79}
		C_k(\hat{\varphi}_+,\infty)=C_k(\hat{\psi}_+,\infty)\ \mbox{for all}\ k\in\NN_0.
	\end{equation}
	
	Next, we consider the homotopy $\tilde{h}^+(t,u)=\tilde{h}^+_t(u)$ defined by
	$$\tilde{h}^+_t(u)=\hat{\psi}_+(u)-t\int_{\Omega}udz\ \mbox{for all}\ (t,u)\in[0,1]\times (W^{1,p}(\Omega)\backslash\{0\}).$$
	\begin{claim}\label{cl2}
		$(\tilde{h}^+_t)'(u)=0$ for all $t\in[0,1]$, all $u\in W^{1,p}(\Omega),\ u\neq 0$.
	\end{claim}
	
	We proceed by contradiction. So, suppose that we can find $t\in[0,1]$ and $u\in W^{1,p}(\Omega),\ u\neq 0$ such that
	\begin{eqnarray}\label{eq80}
		&&(\tilde{h}^+_t)'(u)=0,\nonumber\\
		&\Rightarrow&\left\langle A(u),h\right\rangle+\int_{\Omega}\xi(z)|u|^{p-2}uhdz+\int_{\partial\Omega}\beta(z)|u|^{p-2}uhd\sigma-\int_{\Omega}\mu(u^-)^{p-1}hdz\nonumber\\
		&&=\int_{\Omega}\lambda(u^+)^{p-1}hdz+t\int_{\Omega}hdz\ \mbox{for all}\ h\in W^{1,p}(\Omega).
	\end{eqnarray}
	
	In (\ref{eq80}) we choose $h=-u^-\in W^{1,p}(\Omega)$. Then
	\begin{eqnarray*}
		&&\gamma(u^-)+\mu||u^-||^2_2=t\int_{\Omega}(-u^-)dz\leq 0,\\
		&\Rightarrow&c_{11}||u^-||^2\leq 0\ \mbox{for some}\ c_{11}>0\ (\mbox{since}\ \mu>||\xi||_{\infty}),\\
		&\Rightarrow&u\geq 0,\ u\neq 0.
	\end{eqnarray*}
	
	Then equation (\ref{eq80}) becomes
	\begin{eqnarray}\label{eq81}
		&&\left\langle A(u),h\right\rangle+\int_{\Omega}\xi(z)u^{p-1}hdz+\int_{\partial\Omega}\beta(z)u^{p-1}hd\sigma=\int_{\Omega}(\lambda u^{p-1}+t)hdz\nonumber\\
		&&\hspace{5cm}\mbox{for all}\ h\in W^{1,p}(\Omega),\nonumber\\
		&\Rightarrow&-\Delta_pu(z)+\xi(z)u(z)^{p-1}=\lambda u(z)^{p-1}+t\ \mbox{for almost all}\ z\in\Omega,\\
		&&\frac{\partial u}{\partial n_p}+\beta(z)u^{p-1}=0\ \mbox{on}\ \partial\Omega\ (\mbox{see Papageorgiou \& R\u{a}dulescu \cite{21}})\nonumber.
	\end{eqnarray}
	
	As before, from (\ref{eq81}) and the nonlinear regularity theory (see \cite{12}), we have
	$$u\in C_+\backslash\{0\}.$$
	
	Also, we have
	\begin{eqnarray*}
		&&\Delta_pu(z)\leq(||\xi||_{\infty}+|\lambda|)u(z)^{p-1}\ \mbox{for almost all}\ z\in\Omega,\\
		&&\Rightarrow u\in D_+\\
		&&(\mbox{from the nonlinear strong maximum principle, see Gasinski \& Papageorgiou \cite[p. 738]{7}}).
	\end{eqnarray*}
	
	Let $v\in D_+$ and consider the function
	$$R(v,u)(z)=|Dv(z)|^p-|Du(z)|^{p-2}(Du(z),D\left(\frac{v^p}{u^{p-1}}\right)(z))_{\RR^N}.$$
	
	Using the nonlinear Picone's identity (see, for example, Motreanu, Motreanu \& Papageorgiou \cite[p. 255]{16}), we have
	\begin{eqnarray}\label{eq82}
		0&\leq&\int_{\Omega}R(v,u)dz\nonumber\\
		 &=&||Dv||^p_p-\int_{\Omega}(-\Delta_pu)\left(\frac{v^p}{u^{p-1}}\right)dz-\int_{\partial\Omega}\beta(z)v^pd\sigma\nonumber\\
		&&(\mbox{using the nonlinear Green's identity, see Gasinski \& Papageorgiou \cite[p. 211]{7}})\nonumber\\
		 &=&||Dv||^p_p+\int_{\Omega}\xi(z)v^pdz+\int_{\partial\Omega}\beta(z)v^pd\sigma-\lambda||v||^p_p-t\int_{\Omega}\frac{u^p}{v^{p-1}}dz\ (\mbox{see (\ref{eq81})})\nonumber\\
		&\leq&\gamma(v)-\lambda||v||^p_p\ (\mbox{since}\ u,v\in D_+).
	\end{eqnarray}
	
	Let $v=\hat{u}_1\in D_+$. Then from (\ref{eq82}) we have
	$$0\leq\hat{\lambda}_1-\lambda<0\ (\mbox{since}\ ||\hat{u}_1||_p=1\ \mbox{and}\ \lambda>\hat{\lambda}_m\geq\hat{\lambda}_2>\hat{\lambda}_1;\ \mbox{recall}\ m\geq 2),$$
	a contradiction.
	
	This proves Claim \ref{cl2}.
	
	The homotopy invariance property of the singular homology groups implies that for small $r>0$, we have
	\begin{eqnarray}\label{eq83}
		&&H_k((\tilde{h}^+_0)^0\cap B_r,(\tilde{h}^+_0)^0\cap B_r\backslash\{0\})=H_k((\tilde{h}^+_1)^0\cap B_r,(\tilde{h}^+_1)^0\cap B_r\backslash\{0\})\\
		&&\hspace{7cm}\mbox{for all}\ k\in\NN_0\nonumber.
	\end{eqnarray}
	
	Claim \ref{cl2} implies that
	\begin{eqnarray}\label{eq84}
		&&H_k((\tilde{h}^+_1)^0\cap B_r,(\tilde{h}^+_1)^0\cap B_r\backslash\{0\})=0\ \mbox{for all}\ k\in\NN_0\\
		&&\hspace{1cm}(\mbox{see Motreanu, Motreanu \& Papageorgiou \cite{16}, Proposition 6.61, p. 160}).\nonumber
	\end{eqnarray}
	
	Also, by definition we have
	\begin{eqnarray}\label{eq85}
		&&H_k((\tilde{h}^+_0)^0\cap B_r,(\tilde{h}^+_0) ^0\cap B_r\backslash\{0\})=C_k(\hat{\psi}_+,0)\ \mbox{for all}\ k\in\NN_0\ (\mbox{note that}\ \tilde{h}^+_0=\hat{\psi}_+)
	\end{eqnarray}
	
	From (\ref{eq83}), (\ref{eq84}), (\ref{eq85}) we infer that
	\begin{equation}\label{eq86}
		C_k(\hat{\psi}_+,0)=0\ \mbox{for all}\ k\in\NN_0.
	\end{equation}
	
	Since $\lambda\in(\hat{\lambda}_m,\hat{\lambda}_{m+1})\backslash\hat{\sigma}(p)$, we have
	\begin{eqnarray*}
		&&K_{\hat{\psi}_+}=\{0\}\\
		&\Rightarrow&C_k(\hat{\psi}_+,\infty)=C_k(\hat{\psi}_+,0)\ \mbox{for all}\ k\in\NN_0\\
		&&\hspace{1cm}(\mbox{see Motreanu, Motreanu \& Papageorgiou \cite[p. 160]{16}}),\\
		&\Rightarrow&C_k(\hat{\psi}_+,\infty)=0\ \mbox{for all}\ k\in\NN_0\ (\mbox{see (\ref{eq86})}),\\
		&\Rightarrow&C_k(\hat{\varphi}_+,0)=0\ \mbox{for all}\ k\in\NN_0\ (\mbox{see (\ref{eq79})}).
	\end{eqnarray*}
	
	Similarly for the functional $\hat{\varphi}_-$, using this time the $C^1$-functional $\hat{\psi}_-:W^{1,p}(\Omega)\rightarrow\RR$ defined by
	$$\hat{\psi}_-(u)=\frac{1}{p}\gamma(u)+\frac{\mu}{p}||u^+||^p_p-\lambda||u^-||^p_p\ \mbox{for all}\ u\in W^{1,p}(\Omega).$$
\end{proof}

This proposition permits the exact computation of the critical groups of $\varphi$ at the two nontrivial constant sign smooth solutions $u_0\in D_1$ and $v_0\in-D_+$ (see Proposition \ref{prop7}).
\begin{prop}\label{prop10}
	If hypotheses $H(\xi),H(\beta),H(f)$ hold, $u_0\in D_+$ and $v_0\in -D_+$ from Proposition \ref{prop7} are the only nontrivial constant sign smooth solutions of (\ref{eq1}) and $K_{\varphi}$ is finite, then $C_k(\varphi,u_0)=C_k(\varphi,v_0)=\delta_{k,1}\ZZ$ for all $k\in\NN_0$.
\end{prop}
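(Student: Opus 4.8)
The plan is to transfer the computation to the constant-sign truncated functionals $\hat\varphi_\pm$ and then read off the critical groups at $u_0,v_0$ from the global topology of their sublevel sets. First I would record that, by (\ref{eq6}) and the definition of $G_+$, one has $G_+(z,x)=F(z,x)+\frac{\mu}{p}x^p$ for $x\ge 0$, so that $\hat\varphi_+\equiv\varphi$ on $C_+$ (and, symmetrically, $\hat\varphi_-\equiv\varphi$ on $-C_+$). Since $u_0\in D_+$ and $D_+$ is open in $C^1(\overline\Omega)$, the functionals $\varphi$ and $\hat\varphi_+$ coincide on a $C^1(\overline\Omega)$-neighbourhood of $u_0$. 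Because every critical point of $\varphi$ or of $\hat\varphi_+$ lies in $C^1(\overline\Omega)$ (Lieberman's regularity theorem \cite{12}) and $C^1(\overline\Omega)\hookrightarrow W^{1,p}(\Omega)$ densely and continuously, the critical groups of these functionals at $u_0$ coincide whether computed in $W^{1,p}(\Omega)$ or in $C^1(\overline\Omega)$; hence $C_k(\varphi,u_0)=C_k(\hat\varphi_+,u_0)$ and likewise $C_k(\varphi,v_0)=C_k(\hat\varphi_-,v_0)$ for all $k\in\NN_0$. It therefore suffices to compute $C_k(\hat\varphi_+,u_0)$.

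Next I would pin down $K_{\hat\varphi_+}$. By (\ref{eq42}), $K_{\hat\varphi_+}\subseteq C_+$, and (\ref{eq6}) shows that a nonzero $u\in K_{\hat\varphi_+}$ is a nonnegative solution of (\ref{eq1}); the nonlinear regularity theory together with the nonlinear strong maximum principle (exactly as in the proof of Proposition \ref{prop7}) forces $u\in D_+$, whence the uniqueness hypothesis gives $u=u_0$. Thus $K_{\hat\varphi_+}=\{0,u_0\}$, both points isolated since $K_\varphi$ is finite. The only critical values of $\hat\varphi_+$ are therefore $0=\hat\varphi_+(0)$ and $c_*:=\hat\varphi_+(u_0)$, with $0<c_*$ by (\ref{eq43})--(\ref{eq45}), and $u=0$ is a local minimizer of $\hat\varphi_+$ with $C_k(\hat\varphi_+,0)=\delta_{k,0}\ZZ$ (Proposition \ref{prop6}).

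Now the homological core. Fix $a<0<b<c_*<c$; all four are regular values of $\hat\varphi_+$. By Proposition \ref{prop9}, $C_k(\hat\varphi_+,\infty)=H_k(W^{1,p}(\Omega),\hat\varphi_+^a)=0$ for every $k$, so --- $W^{1,p}(\Omega)$ being contractible --- the long exact sequence of this pair gives $H_0(\hat\varphi_+^a)=\ZZ$ and $H_k(\hat\varphi_+^a)=0$ for $k\ge 1$. Since $0$ is the unique critical point with critical value in $(a,b]$, one has $H_k(\hat\varphi_+^b,\hat\varphi_+^a)=C_k(\hat\varphi_+,0)=\delta_{k,0}\ZZ$ (see Motreanu, Motreanu \& Papageorgiou \cite{16}), and inserting this into the long exact sequence of $(\hat\varphi_+^b,\hat\varphi_+^a)$ yields $H_0(\hat\varphi_+^b)=\ZZ^2$, $H_k(\hat\varphi_+^b)=0$ for $k\ge 1$ (the second component being the valley around the local minimizer $0$). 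On the other hand $c$ exceeds all critical values, so by the second deformation theorem (recall $\hat\varphi_+$ satisfies the C-condition, Proposition \ref{prop4}) $W^{1,p}(\Omega)$ deformation retracts onto $\hat\varphi_+^c$, which is hence contractible. Since $u_0$ is the unique critical point with critical value in $(b,c]$, $H_k(\hat\varphi_+^c,\hat\varphi_+^b)=C_k(\hat\varphi_+,u_0)$, and the long exact sequence of $(\hat\varphi_+^c,\hat\varphi_+^b)$ reduces to
\[
0\to C_1(\hat\varphi_+,u_0)\to H_0(\hat\varphi_+^b)\xrightarrow{\ \iota_*\ }H_0(\hat\varphi_+^c)\to C_0(\hat\varphi_+,u_0)\to 0,
\]
with $C_k(\hat\varphi_+,u_0)=0$ for $k\ge 2$. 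The inclusion-induced homomorphism $\iota_*:\ZZ^2\to\ZZ$ is onto (both components of $\hat\varphi_+^b$ map into the single component of the contractible set $\hat\varphi_+^c$), so $C_0(\hat\varphi_+,u_0)={\rm coker}\,\iota_*=0$ and $C_1(\hat\varphi_+,u_0)={\rm ker}\,\iota_*\cong\ZZ$. Hence $C_k(\hat\varphi_+,u_0)=\delta_{k,1}\ZZ$, and by the reduction of the first paragraph $C_k(\varphi,u_0)=\delta_{k,1}\ZZ$ for all $k\in\NN_0$.

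The identical argument applied to $\hat\varphi_-$ --- using (\ref{eq7}), the cone $-D_+$, and Proposition \ref{prop9} for $\hat\varphi_-$ --- gives $C_k(\varphi,v_0)=\delta_{k,1}\ZZ$. I expect the main obstacle to be the reduction in the first paragraph: it is the invariance of critical groups under the passage from $W^{1,p}(\Omega)$ to the densely embedded $C^1(\overline\Omega)$ that makes the identity $\hat\varphi_+\equiv\varphi$ on $C_+$ usable, precisely because $C_+$ has empty interior in $W^{1,p}(\Omega)$, so this coincidence cannot be exploited directly in the Sobolev topology. A secondary delicate point is the verification in the second paragraph that $\hat\varphi_+$ has no nonnegative critical point besides $u_0$, where the strong maximum principle and the uniqueness hypothesis are essential.
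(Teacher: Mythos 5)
Your proof is correct, but it departs from the paper's argument at both of its key steps, so a comparison is worthwhile. For the identification $C_k(\varphi,u_0)=C_k(\hat{\varphi}_+,u_0)$ the paper runs the affine homotopy $h^+_t=(1-t)\varphi+t\hat{\varphi}_+$ and invokes the Corvellec--Hantoute homotopy invariance theorem, verifying that $u_0$ is uniformly isolated along the homotopy by the boot-strap ($L^\infty$ bound, Lieberman regularity, openness of $D_+$ in $C^1(\overline{\Omega})$, finiteness of $K_\varphi$); you instead use the local coincidence $\varphi=\hat{\varphi}_+$ on a $C^1(\overline{\Omega})$-neighbourhood of $u_0$ together with the invariance of critical groups under passage from $W^{1,p}(\Omega)$ to $C^1(\overline{\Omega})$. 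Both routes rest on the same regularity input, but your reduction leans on a nontrivial theorem (the $W^{1,p}$-versus-$C^1$ equality of critical groups, cf.\ \cite{16}) that is \emph{not} a consequence of mere dense continuous embedding, as your phrasing suggests; it must be cited in the form adapted to Robin boundary conditions (the proof goes through using Lieberman's boundary regularity, but you should say so rather than gesture at density). For the computation of $C_k(\hat{\varphi}_+,u_0)$ the paper uses the long exact sequence of the triple $\hat{\varphi}_+^{\eta}\subseteq\hat{\varphi}_+^{a}\subseteq W^{1,p}(\Omega)$ plus the rank formula to get ${\rm rank}\,C_1(\hat{\varphi}_+,u_0)\leq 1$ and vanishing in other degrees, and then needs the mountain-pass characterization $C_1(\hat{\varphi}_+,u_0)\neq 0$ to finish; you instead compute $H_*(\hat{\varphi}_+^{a})$ and $H_*(\hat{\varphi}_+^{b})$ outright from $C_k(\hat{\varphi}_+,\infty)=0$ (Proposition \ref{prop9}) and $C_k(\hat{\varphi}_+,0)=\delta_{k,0}\ZZ$, then read $C_k(\hat{\varphi}_+,u_0)$ off the exact sequence of $(\hat{\varphi}_+^{c},\hat{\varphi}_+^{b})$ with $\hat{\varphi}_+^{c}$ contractible by the second deformation theorem. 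This is a genuine gain: it dispenses with the mountain-pass lemma for critical groups and pins down $C_1(\hat{\varphi}_+,u_0)=\ker\left(\ZZ^2\to\ZZ\right)\cong\ZZ$ exactly, with no residual torsion ambiguity that the rank inequality alone would leave. The remaining steps (identification of $K_{\hat{\varphi}_+}=\{0,u_0\}$ via the strong maximum principle and the uniqueness hypothesis, and the symmetric treatment of $v_0$) agree with the paper.
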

\begin{proof}
	We give the proof for the solution $u_0\in D_+$, the proof for $v_0\in-D_+$ is similar.
	
	From (\ref{eq42}) and the hypothesis of the proposition, we have
	$$K_{\hat{\varphi}_+}=\{0,u_0\}.$$
	
	Let $\eta<0<a<\hat{m}^+_{\rho}$ (see (\ref{eq43})). We consider the following triple of sets
	$$\hat{\varphi}^{\eta}_+\subseteq\hat{\varphi}^a_+\subseteq W^{1,p}(\Omega).$$
	
	For this triple, we consider the corresponding long exact sequence of singular homology groups (see Motreanu, Motreanu \& Papageorgiou \cite[Proposition 6.14, p. 143]{16}). We have
	\begin{equation}\label{eq87}
		\cdots\rightarrow H_k(W^{1,p}(\Omega),\hat{\varphi}^{\eta}_+)\xrightarrow{j_*}H_k(W^{1,p}(\Omega),\hat{\varphi}^a_+)\xrightarrow{\hat{\partial}_*}H_{k-1}(\hat{\varphi}^a_+,\hat{\varphi}^{\eta}_+)\rightarrow\cdots\ \mbox{for all}\ k\in\NN.
	\end{equation}
	
	Here, $j_*$ is the group homomorphism induced by the inclusion
	$$(W^{1,p}(\Omega),\hat{\varphi}^{\eta}_+){\overset{j}\hookrightarrow}(W^{1,p}(\Omega),\hat{\varphi}^a_+)$$
	and $\hat{\partial}_*$ is the composed boundary homomorphism (see \cite{16}). By the rank theorem, we have
	\begin{eqnarray}\label{eq88}
		{\rm rank}\, H_k(W^{1,p}(\Omega),\hat{\varphi}^a_+)&=&{\rm rank}\, {\rm ker}\,\hat{\partial}_*+{\rm rank}\, {\rm im}\,\hat{\partial}_*\nonumber\\
		&=&{\rm rank}\, {\rm im}\, j_*+{\rm rank}\, {\rm im}\,\hat{\partial}_*\\
		&&(\mbox{from the exactness of (\ref{eq87})}).\nonumber
	\end{eqnarray}
	
	From the choice of $\eta$ and since $K_{\hat{\varphi}_+}=\{0,u_0\}$, we have
	\begin{equation}\label{eq89}
		H_k(W^{1,p}(\Omega),\hat{\varphi}^{\eta}_+)=C_k(\hat{\varphi}_+,u_0)\ \mbox{for all}\ k\in\NN_0.
	\end{equation}
	
	Also because $a\in(0,\hat{m}^+_{\rho})$ and since $K_{\hat{\varphi}_+}=\{0,u_0\}$, we have
	\begin{eqnarray}\label{eq90}
		&&H_k(W^{1,p}(\Omega),\hat{\varphi}^a_+)=C_k(\hat{\varphi}_+,u_0)\ \mbox{for all}\ k\in\NN_0\\
		&&\hspace{1cm}(\mbox{see Motreanu, Motreanu \& Papageorgiou \cite{16}, Lemma 6.55, p. 157})\nonumber.
	\end{eqnarray}
	
	Finally, because $\eta<0<a<\hat{m}^+_{\rho}$ and since $K=\{0,u_0\}$, we have
	\begin{eqnarray}\label{eq91}
		 &&H_{k-1}(\hat{\varphi}^a_+,\hat{\varphi}^{\eta}_+)=C_{k-1}(\hat{\varphi}_+,0)=\delta_{k-1,0}\ZZ=\delta_{k,1}\ZZ\ \mbox{for all}\ k\in\NN_0\\
		&&\hspace{8cm}(\mbox{see Proposition \ref{prop6}}).\nonumber
	\end{eqnarray}
	
	Returning to (\ref{eq88}) and using (\ref{eq89}), (\ref{eq90}), (\ref{eq91}), we see that
	\begin{equation}\label{eq92}
		{\rm rank}\, C_1(\hat{\varphi}_+,u_0)\leq 1.
	\end{equation}
	
	On the other hand, $u_0\in D_+$ is a critical point of mountain pass type (see the proof of Proposition \ref{prop7}). So, we have
	\begin{equation}\label{eq93}
		C_1(\varphi, u_0)\neq 0
	\end{equation}
	(see Corollary 6.81, p. 168 of Motreanu, Motreanu \& Papageorgiou \cite{16}). From (\ref{eq92}), (\ref{eq93}) and since in (\ref{eq87}) the part $k\geq 2$ is trivial (see (\ref{eq89}), (\ref{eq90})), we infer that
	\begin{equation}\label{eq94}
		C_k(\hat{\varphi}_+,u_0)=\delta_{k,1}\ZZ\ \mbox{for all}\ k\in\NN_0.
	\end{equation}
	
	We consider the homotopy $h^+(t,u)=h^+_t(u)$ defined by
	$$h^+_t(u)=(1-t)\varphi(u)+t\hat{\varphi}_+(u)\ \mbox{for all}\ (t,u)\in[0,1]\times W^{1,p}(\Omega).$$
	
	Suppose we could find $\{t_n\}_{n\geq 1}\subseteq[0,1]$ and $\{u_n\}_{n\geq 1}\subseteq W^{1,p}(\Omega)$ such that
	\begin{equation}\label{eq95}
		t_n\rightarrow t\in[0,1],u_n\rightarrow u_0\ \mbox{in}\ W^{1,p}(\Omega)\ \mbox{and}\ (h^+_{t_n})'(u_n)=0\ \mbox{for all}\ n\in\NN.
	\end{equation}
	
	From the equality in (\ref{eq95}), we have
	\begin{eqnarray}\label{eq96}
		&&\left\langle A(u_n),h\right\rangle+\int_{\Omega}\xi(z)|u_n|^{p-2}u_nhdz+\int_{\partial\Omega}\beta(z)|u_n|^{p-2}hd\sigma-t_n\int_{\Omega}(u^-_n)^{p-1}hdz\nonumber\\
		&&=(1-t_n)\int_{\Omega}f(z,u_n)hdz+t_n\int_{\Omega}f(z,u^+_n)hdz\ \mbox{for all}\ h\in W^{1,p}(\Omega),\nonumber\\
		 &\Rightarrow&-\Delta_pu_n(z)+\xi(z)|u_n(z)|^{p-2}u_n(z)=(1-t_n)f(z,u_n(z))+t_nf(z,u_n^+(z))+t_n(u^-_n)^{p-1}\nonumber\\
		&&\mbox{for almost all}\ z\in\Omega,\ \frac{\partial u_n}{\partial n_p}+\beta(z)|u_n|^{p-2}u_n=0\ \mbox{on}\ \partial\Omega,\ \mbox{for all}\ n\in\NN\\
		&&(\mbox{see Papageorgiou \& R\u{a}dulescu \cite{21}}).\nonumber
	\end{eqnarray}
	
	From (\ref{eq96}) and Papageorgiou \& R\u{a}dulescu \cite{22}, we know that we can find $M_4>0$ such that
	$$||u_n||_{\infty}\leq M_4\ \mbox{for all}\ n\in\NN.$$
	
	Using Theorem 2 of Lieberman \cite{12}, we can find $\alpha\in(0,1)$ and $M_5>0$ such that
	\begin{equation}\label{eq97}
		u_n\in C^{1,\alpha}(\overline{\Omega})\ \mbox{and}\ ||u_n||_{C^{1,\alpha}(\overline{\Omega})}\leq M_5\ \mbox{for all}\ n\in\NN.
	\end{equation}
	
	Exploiting the compact embedding of $C^{1,\alpha}(\overline{\Omega})$ into $C^1(\overline{\Omega})$, we infer from (\ref{eq95}) and (\ref{eq97}) that
	$$u_n\rightarrow u_0\in D_+\ \mbox{in}\ C^1(\overline{\Omega})\ \mbox{as}\ n\rightarrow\infty .$$
	
	Since $D_+\subseteq C^1(\overline{\Omega})$ is open, we can find $n_0\in\NN$ such that
	$$u_n\in D_+\ \mbox{for all}\ n\geq n_0.$$
	
	But note that $\varphi|_{C_+}=\hat{\varphi}_+|_{C_+}$ (see (\ref{eq6})). So, we can see that $\{u_n\}_{n\geq 1}\subseteq K_{\varphi}$ and this contradicts our hypothesis that the critical set $K_{\varphi}$ is finite. Therefore (\ref{eq95}) cannot happen and so we can use Theorem 5.2 of Corvellec \& Hantoute \cite{5} (homotopy invariance of critical groups) and have
	\begin{eqnarray*}
		&&C_k(h^+_0,u_0)=C_k(h^+_1,u_0)\ \mbox{for all}\ k\in\NN_0,\\
		&\Rightarrow&C_k(\varphi,u_0)=C_k(\hat{\varphi}_+,u_0)\ \mbox{for all}\ k\in\NN_0,\\
		&\Rightarrow&C_k(\varphi,u_0)=\delta_{k,1}\ZZ\ \mbox{for all}\ k\in\NN_0\ (\mbox{see (\ref{eq94})}).
	\end{eqnarray*}
	
	Similarly for the negative solution $v_0\in-D_+$ using this time the functional $\hat{\varphi}_-$ and (\ref{eq7}).
\end{proof}

We are now ready for the complete multiplicity theorem (three solutions theorem) for problem (\ref{eq1}).
\begin{theorem}\label{th11}
	If hypotheses $H(\xi),H(\beta),H(f)$ hold, then problem (\ref{eq1}) has at least three nontrivial smooth solutions
	$$u_0\in D_+,v_0\in-D_+\ \mbox{and}\ y_0\in C^1(\overline{\Omega})\backslash\{0\}.$$
\end{theorem}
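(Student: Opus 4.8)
The plan is to argue by contradiction via the Morse relation \eqref{eq4}, feeding into it the critical group computations of Propositions \ref{prop6}, \ref{prop8} and \ref{prop10}. Before that, two easy reductions. First, if the critical set $K_\varphi$ is infinite, then every element of $K_\varphi$ is a weak solution of \eqref{eq1}, hence (by the $L^\infty$-bound of Papageorgiou \& R\u{a}dulescu \cite{22} followed by the regularity result of Lieberman \cite{12}) a nontrivial smooth solution, so \eqref{eq1} already has infinitely many nontrivial smooth solutions and we are done. Second, if \eqref{eq1} possesses a nontrivial constant-sign solution different from the $u_0\in D_+$ and $v_0\in -D_+$ supplied by Proposition \ref{prop7}, that solution is the desired third one. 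Hence it suffices to assume that $K_\varphi$ is finite and that $u_0,v_0$ are the only nontrivial constant-sign solutions, and then to derive a contradiction from $K_\varphi=\{0,u_0,v_0\}$.

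Under this assumption I would assemble the relevant critical groups. From Proposition \ref{prop6}, $u=0$ is a local minimizer of $\varphi$, so $C_k(\varphi,0)=\delta_{k,0}\ZZ$ for all $k\in\NN_0$. The hypotheses of Proposition \ref{prop10} now hold verbatim, giving $C_k(\varphi,u_0)=C_k(\varphi,v_0)=\delta_{k,1}\ZZ$ for all $k\in\NN_0$. By Proposition \ref{prop5}, $\varphi$ satisfies the C-condition, and since $K_\varphi$ is finite we have $\inf\varphi(K_\varphi)>-\infty$, so the critical groups of $\varphi$ at infinity are defined; Proposition \ref{prop8} then yields $C_m(\varphi,\infty)\neq 0$ with $m\geq 2$ the integer of hypothesis $H(f)(ii)$.

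Next I would substitute these data into the Morse relation \eqref{eq4}. With $K_\varphi=\{0,u_0,v_0\}$ the left-hand side is the polynomial $1+2t$, while the right-hand side is $P(t,\infty)+(1+t)Q(t)$, where $P(t,\infty)=\sum_{k\geq 0}\mathrm{rank}\,C_k(\varphi,\infty)\,t^k$ contains a summand $\mathrm{rank}\,C_m(\varphi,\infty)\,t^m$ with $\mathrm{rank}\,C_m(\varphi,\infty)\geq 1$, and $Q$ has nonnegative integer coefficients $\beta_k$. Comparing the coefficient of $t^m$: since $m\geq 2$ the left-hand side contributes $0$, whereas the right-hand side contributes $\mathrm{rank}\,C_m(\varphi,\infty)+(\beta_m+\beta_{m-1})\geq 1$, a contradiction. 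Therefore $K_\varphi\neq\{0,u_0,v_0\}$, so there is $y_0\in K_\varphi\setminus\{0,u_0,v_0\}$; it is a nontrivial solution of \eqref{eq1}, and by the same regularity chain as above $y_0\in C^1(\overline{\Omega})\setminus\{0\}$. Together with $u_0\in D_+$ and $v_0\in -D_+$ this provides the three asserted solutions.

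The Morse-theoretic bookkeeping is routine; the weight of the argument sits in its inputs, above all in Proposition \ref{prop8} — the computation of $C_m(\varphi,\infty)$ in the resonant regime, which is where linking over cones and the Fadell--Rabinowitz cohomological index enter. I would expect that step to be the main obstacle, everything else being comparatively soft.
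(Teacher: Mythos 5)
Your proposal is correct and follows essentially the same route as the paper: the paper likewise reduces to $K_\varphi$ finite, combines Propositions \ref{prop6}, \ref{prop8} and \ref{prop10}, and uses the consequence of the Morse relation \eqref{eq4} (stated in Section 2) that $C_m(\varphi,\infty)\neq 0$ forces a critical point $y_0$ with $C_m(\varphi,y_0)\neq 0$, which the critical group computations exclude from being $0$, $u_0$ or $v_0$ since $m\geq 2$. Your explicit coefficient-of-$t^m$ comparison is just an unpacking of that same step, and your explicit reduction to the case where $u_0,v_0$ are the only constant-sign solutions (needed to invoke Proposition \ref{prop10}) is a small but welcome clarification the paper leaves implicit.
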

\begin{proof}
	From Proposition \ref{prop7}, we already have two nontrivial constant sign smooth solutions
	$$u_0\in D_+\ \mbox{and}\ v_0\in-D_+.$$
	
	Suppose that $K_{\varphi}$ is finite. Otherwise we already have an infinity of nontrivial solutions in addition to $u_0,v_0$ which belong in $C^1(\overline{\Omega})$ (by the nonlinear regularity theory, see \cite{12}) and so we are done.
	
	From Proposition \ref{prop10}, we have
	\begin{eqnarray}\label{eq98}
		C_k(\varphi,u_0)=C_k(\varphi,v_0)=\delta_{k,1}\ZZ\ \mbox{for all}\ k\in\NN_0.
	\end{eqnarray}
	
	From Proposition \ref{prop6}, we have
	\begin{equation}\label{eq99}
		C_k(\varphi,0)=\delta_{k,0}\ZZ\ \mbox{for all}\ k\in\NN_0.
	\end{equation}
	
	According to Proposition \ref{prop8}, $C_m(\varphi,\infty)\neq 0$. So, there exists $y_0\in W^{1,p}(\Omega)$ such that
	\begin{equation}\label{eq100}
		y_0\in K_{\varphi}\ \mbox{and}\ C_m(\varphi,y_0)\neq 0\ (\mbox{see Section 2}).
	\end{equation}
	
	Since $m\geq 2$, we infer from (\ref{eq98}), (\ref{eq99}), (\ref{eq100}) that
	$$y_0\notin\{0,u_0,v_0\}.$$
	
	Therefore $y_0$ is the third nontrivial solution of (\ref{eq1}) (see (\ref{eq100})) and by the nonlinear regularity theory (see \cite{12}), we have $y_0\in C^1(\overline{\Omega})$.
\end{proof}

\medskip
{\bf Acknowledgments}. This research  was supported in part by  the  Slovenian  Research  Agency
grants P1-0292, J1-7025, and J1-6721. V. R\u adulescu was supported by a grant of the Romanian National
Authority for Scientific Research and Innovation, CNCS-UEFISCDI, project number PN-III-P4-ID-PCE-2016-0130.

\end{document}